%
%
%

\documentclass[11pt]{article}
\usepackage{anysize}\marginsize{3.5cm}{3.5cm}{1.3cm}{2cm}
\makeatletter\@ifundefined{pdfpagewidth}{}{\pdfpagewidth=21.0cm\pdfpageheight=29.7cm}\makeatother 
\usepackage{amssymb,amsmath}

\emergencystretch=3em
\pagestyle{myheadings}


\makeatletter
\let\orig@item=\@item \def\@item[#1]{\orig@item[\rm #1]}
\renewenvironment{abstract}{\begin{quote}\footnotesize\textbf{\abstractname.}}{\end{quote}\bigskip}
\renewcommand\@seccntformat[1]{\csname the#1\endcsname.\enspace}
\renewcommand\section{\@startsection{section}{1}{\z@}{-2\baselineskip plus 0.5\baselineskip minus 0.5\baselineskip}{0.75\baselineskip plus 0.5\baselineskip}{\normalsize\bfseries\centering}}
\renewcommand\subsection{\@startsection{subsection}{2}{\z@}{-2\baselineskip plus 0.5\baselineskip minus 0.5\baselineskip}{0.75\baselineskip plus 0.5\baselineskip}{\normalsize\bfseries\centering}}
\renewcommand\paragraph{\@startsection{paragraph}{4}{\z@}{1\baselineskip}{-0.5em}{\normalsize\bfseries}}
\let\origcaption=\caption \renewcommand\caption[1]{\parbox{0.66\textwidth}{\origcaption{#1}}}

\renewcommand\@begintheorem[2]{\trivlist\item[\hskip\labelsep{\bfseries#1 #2.}]\it}
\renewcommand\@opargbegintheorem[3]{\trivlist\item[\hskip\labelsep{\bfseries#1 #2}] {\bfseries(#3).}\enspace\it\ignorespaces}
\makeatother


\newtheorem{satz}{Satz}[section]
\makeatletter\@addtoreset{equation}{satz}\makeatother

\newtheorem{theorem}[satz]{Theorem}

\newtheorem{cor}[satz]{Corollary}
\newtheorem{example}[satz]{Example}

\newtheorem{lemma}[satz]{Lemma}
\newtheorem{proposition}[satz]{Proposition}
\newtheorem{corollary}[satz]{Corollary}

\newtheorem{remark}[satz]{Remark}

\newtheorem{introtheorem}{Theorem}
\newenvironment{proof}[1][Proof]{\trivlist\item[\hskip\labelsep{\it #1.}]}{\hspace*{\fill}$\Box$\endtrivlist}


\newcommand\subjclass[1]{{\renewcommand\thefootnote{}\footnotetext{2000 \textit{Mathematics Subject Classification:} #1.}}}
\newcommand\keywords[1]{{\renewcommand\thefootnote{}\footnotetext{\textit{Keywords.} #1.}}}

\newcommand\engq[1]{`#1'}
\newcommand\engqq[1]{``#1''}

\renewcommand\emptyset{\varnothing}  
\renewcommand\ge{\geqslant}  
\renewcommand\le{\leqslant}  
\renewcommand\epsilon{\varepsilon}
\renewcommand\phi{\varphi}
\renewcommand\bar{\overline}

\renewcommand\tilde{\widetilde}

\renewcommand\P{\mathbb P}

\newcommand\be{\begingroup\arraycolsep=0.13888em\begin{eqnarray*}}
\newcommand\ee{\end{eqnarray*}\endgroup}

\newcommand\set[1]{\left\{#1\right\}}

\newcommand\with{\ \vrule\ }

\newcommand\maxmatrcols{10}
\newlength\matrcolsep \matrcolsep=\arraycolsep
\newcommand\matr[1]{{\arraycolsep=\matrcolsep\left(\begin{array}{*{\maxmatrcols}{c}}#1\end{array}\right)}}
\newcommand\tline{\noalign{\vskip0.4ex}\hline\noalign{\vskip0.65ex}}

\newcommand\Q{\mathbb Q}
\newcommand\R{\mathbb R}
\newcommand\Z{\mathbb Z}

\newcommand\newop[2]{\newcommand#1{\mathop{\rm #2}\nolimits}}
\newop\NS{NS} 
\newop\Amp{Amp} 
\newop\Bl{Bl} 
\newop\End{End} 
\newop\Nef{Nef}
\newop\vol{vol}
\newop\Bigcone{Big} 
\newop\interior{int}


\newcommand\PtP{\P^1\times\P^1}

\newop{\rang}{rang}
\newop{\dv}{div}
\newop{\id}{id}
\newop{\ord}{ord}
\newop{\Div}{Div}
\newop{\Face}{Face}
\newop{\Vol}{Vol}
\newop{\Neg}{Neg}
\newop{\Cox}{Cox}
\newop{\Null}{Null}
\newop{\NE}{\overline{{NE}}}


\begin{document}

   \title{Volumes of Zariski chambers}
   \author{Thomas Bauer and David Schmitz}
   \date{May 15, 2012}
   \maketitle
   \thispagestyle{empty}
   \subjclass{Primary 14C20; Secondary 14J26}
   \keywords{algebraic surface, nef cone, Zariski chamber, volume}


\begin{abstract}
   Zariski chambers are natural pieces into which the big cone of
   an algebraic surface decomposes. They have so far been studied
   both from a geometric and from a combinatorial perspective.
   In the present paper we
   complement the picture with a \emph{metric}
   point of view
   by studying a suitable notion of \emph{chamber sizes}.
   Our first result gives a precise condition for the nef cone
   volume to be finite and provides a method for computing it
   inductively. Our second result determines the volumes of
   arbitrary Zariski chambers from nef cone volumes of blow-downs.
   We illustrate the
   applicability of this method by explicitly
   determining the chamber volumes
   on Del Pezzo and other
   anti-canonical surfaces.
\end{abstract}


\section*{Introduction}

   In this note we study the natural decomposition of the big cone
   on a smooth projective surface into Zariski chambers
   as introduced in \cite{BKS}. Being convex cones (and therefore
   non-compact) the chambers
   cannot a priori be compared in terms of size. The purpose of this
   note is to introduce a notion of volume of Zariski chambers,
   find criteria for finiteness of chamber volumes, and to
   show how chamber volumes can be calculated explicitly.

   Let $X$ be a smooth projective surface. We consider the
   convex cone $\Bigcone(X)$ in the N\'eron-Severi vector space
   $N_\R^1(X) := N^1(X) \otimes \R$ spanned by the classes of
   big divisors on $X$. By the main result of \cite{BKS}, it
   admits a locally finite decomposition
   into locally polyhedral subcones with the following properties:
   \begin{itemize}
   \item
      the support of the negative part in the Zariski decomposition
      is constant on each subcone,
   \item
      the volume function is given by a quadratic polynomial on each
      subcone, and
   \item
      the stable base loci are constant in the interior of each subcone.
   \end{itemize}
   On account of the first listed property the subcones are called
   \emph{Zariski chambers}.
   For a big and nef divisor $P$ on $X$ we consider the set $\Null(P)$ of
   irreducible curves having intersection zero with $P$. The \emph{chamber} $\Sigma_P$
   corresponding to $P$ consists of all big divisors whose negative part in the
   Zariski decomposition has support $\Null(P)$. For example, if $P$ is
   ample, then $\Null(P)$ is empty, thus $\Sigma_P$ is the intersection
   of the big cone with the nef cone, the \emph{nef chamber}.

   Zariski chambers have first been studied with respect to
   \emph{geometric} aspects in \cite{BKS}. A \emph{combinatorial} point of view
   has been taken in \cite{BFN}, where a method for determining
   the number of chambers was presented. In the present paper we
   would like to complement the picture with a \emph{metric}
  point of view: we ask whether one can measure the \engq{size}
  of a chamber -- with the aim of introducing a quantity that tells, intuitively,
  \engq{how far} a line bundle can be moved without changing its stable
  base locus. A natural starting point is an
   invariant that was introduced in \cite{Pe} to measure the nef
   cones of Del Pezzo surfaces. It has recently been studied by
   Derenthal in a series of papers
   (see \cite{D}, \cite{DJT}, and also
   \cite{D3}, \cite{D4}, and \cite{D5}).
   We extend
   this notion in order to measure arbitrary Zariski chambers on
   arbitrary surfaces.

   Note to begin with that the N\'eron-Severi vector
   space~$N_\R^1(X)$ can be equipped with
   a canonical Lebesgue
   measure $ds$ (not depending on the choice of a basis or on an
   isomorphism with $\R^n$) by requiring that the lattice
   $N^1(X)$ has covolume~1, i.e., by normalizing it in such a way
   such that the fundamental parallelotope of $N^1(X)$ with
   respect to a fixed basis has $ds$-volume 1. The transformation
   formula together with the fact that a matrix transforming
   lattice bases into lattice bases has determinant~$\pm 1$
   guarantees the independence of the choice of a lattice basis.

   Consider then for a convex cone $C\subset N_\R^1(X)$ in the
   N\'eron-Severi vector space of a smooth projective surface $X$ the
   set
   $$
      \mathcal C_C := \bar C \cap (-K_X)^{\le 1},
   $$
   where $(-K_X)^{\le 1}$ denotes the half space of divisors having intersection
   at most 1 with the anticanonical divisor $-K_X$ on $X$.
   The \textit{cone volume} $\Vol(C)$ is defined to be the $ds$-volume of the
   set $\mathcal C_C$.
   Note that the set $\mathcal C_C$ need not be compact, hence infinite cone
   volume can occur. Our first main result gives a necessary and sufficient condition
   for the nef cone to have finite volume, and moreover states that in this case
   the volume can be computed inductively:

   \begin{introtheorem}
      Let $X$ be a smooth projective surface with Picard number $\rho$.
      The nef cone volume $\Vol(\Nef(X))$ is finite if and
      only if the anticanonical divisor $-K_X$ on $X$ is big. In this case there exists a divisor
      $D\in N^1_\R(X)$ such that the nef cone volume is given by
      $$
         \Vol(\Nef(X)) = \frac1{\rho}\cdot \sum_{E}{(D\cdot E)\cdot\Vol(\Nef(\pi_E(X)))},
      $$
      with the sum taken over all $(-1)$-curves $E$ in $X$ and $\pi_E$
      denoting the contraction of $E$.
   \end{introtheorem}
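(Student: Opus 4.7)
I would treat the two implications together using convex duality: the set $\mathcal C_{\Nef(X)}$ is closed and convex, and it is bounded precisely when the linear form $D\mapsto -K_X\cdot D$ is strictly positive on $\Nef(X)\setminus\{0\}$. Under the self-duality of $N^1_\R(X)$ given by the intersection pairing, this is equivalent to $-K_X\in\interior\bigl(\Nef(X)^{\vee}\bigr)=\interior\NE(X)$; on a surface $\NE(X)$ equals the closed pseudo-effective cone and its interior is the big cone, so finiteness is equivalent to $-K_X$ being big.

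\textbf{Pyramid reduction and divergence theorem.} Assume $-K_X$ is big, so $-K_X\cdot D>0$ on every nonzero nef class, and set $S:=\Nef(X)\cap\{-K_X\cdot D=1\}$, a compact $(\rho-1)$-dimensional polytope. Since $\mathcal C:=\mathcal C_{\Nef(X)}$ is the pyramid with apex $0$ over $S$, a change of variables yields $\Vol(\mathcal C)=\tfrac{1}{\rho}\,\mu(S)$, where $\mu$ is the measure on $\{-K_X\cdot D=1\}$ characterised by $\Vol\bigl(\mathrm{cone}(0,B)\bigr)=\tfrac{1}{\rho}\mu(B)$. I would next convert $\mu(S)$ into a sum over facets by a divergence-theorem argument on that affine hyperplane: pick a class $D\in N^1_\R(X)$ with $-K_X\cdot D=1$ so that the radial field $R(x):=x-D$ is tangent to the hyperplane and has $\mu$-divergence $\rho-1$; the facets of $S$ are $T_C:=S\cap C^{\perp}$ with $C$ ranging over the extremal rays of $\NE(X)$, and on $T_C$ one has $R\cdot C\equiv -D\cdot C$ since $x\cdot C=0$ there. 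The divergence theorem then expresses $(\rho-1)\mu(S)$ as $\sum_C(D\cdot C)\,\sigma_C(T_C)$ with $\sigma_C$ the transverse measure induced by $\mu$.

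\textbf{Facet identification and the main obstacle.} For every $(-1)$-curve $E$, the pullback $\pi_E^{\ast}$ gives a lattice- and intersection-compatible isomorphism $N^1(\pi_E(X))_\R\xrightarrow{\sim}E^{\perp}$ which, by the projection formula, carries $-K_{\pi_E(X)}$ to the restriction of $-K_X$; it therefore identifies $T_E$ with the slice used to compute $\Vol(\Nef(\pi_E(X)))$ on the blow-down, and a comparison of the two conical-measure normalisations should give $\sigma_E(T_E)=(\rho-1)\,\Vol(\Nef(\pi_E(X)))$. Substituting this produces the stated formula \emph{up to the contributions from extremal rays $C$ that are not $(-1)$-curves}. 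To kill these I would pin $D$ down by imposing $D\cdot C=0$ for every such $C$ in addition to $-K_X\cdot D=1$. The main obstacle of the proof is exactly to verify that this overdetermined system of linear conditions admits a solution in $N^1_\R(X)$: this is the step that genuinely invokes the bigness of $-K_X$, presumably via the cone theorem on a surface combined with a structural description of the non-$(-1)$-curve extremal rays of $\NE(X)$. A secondary issue is to track sign and orientation conventions in the divergence theorem so that the facet contributions appear with coefficient $+D\cdot E$ as stated.
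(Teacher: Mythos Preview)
Your finiteness argument and the overall architecture match the paper's. The paper formalises the duality step as a separate lemma on closed convex cones (the interior of the dual consists exactly of the strictly positive functionals), and for the volume formula it decomposes the full $\rho$-dimensional polytope $\mathcal P_X=\Nef(X)\cap(-K_X)^{\le1}$ into pyramids with apex $D$ chosen on the top facet $(-K_X)^{=1}$, rather than first passing to the slice $S$ and then running the divergence theorem there. This is the same computation one dimension up: each side facet $\mathcal P_X\cap E^{\perp}$ is identified via $\pi_E^{\ast}$ directly with $\mathcal C_{\Nef(\pi_E(X))}$ (using that $-K_X$ restricts to $-K_{\pi_E(X)}$ on $E^{\perp}$), and in the lattice basis extending $\pi_E^{\ast}N^1(\pi_E(X))$ by $E$ the pyramid height is exactly $D\cdot E$. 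One thus gets the factor $\tfrac1\rho$ in a single step and avoids your two measure-normalisation comparisons entirely.

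The step you flag as the main obstacle is indeed the heart of the argument, and the paper does not go through the cone theorem to resolve it. Instead: write $-K_X=A+B$ with $A$ an ample and $B$ an effective $\Q$-divisor (possible by bigness). For any irreducible curve $C$ with $C^2\le-2$, adjunction gives $-K_X\cdot C\le0$, hence $B\cdot C<0$; therefore every such $C$ lies in $\Neg(B)$, the support of the negative part of the Zariski decomposition of $B$. Since $\Neg(B)$ has negative-definite intersection matrix, there is a big and nef class $P$ with $\Null(P)=\Neg(B)$, and one takes $D:=P/(-K_X\cdot P)$. This $D$ is nef---which simultaneously resolves your sign worry, since then $D\cdot E\ge0$ for every $(-1)$-curve $E$ and $D$ lies inside $\mathcal P_X$ so the pyramid volumes are genuinely additive---and satisfies $D\cdot C=0$ for all curves with $C^2\le-2$. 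The ``structural description'' you anticipate thus amounts to the adjunction observation that non-$(-1)$-curve extremal classes have self-intersection at most $-2$.
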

   \noindent
   When applied to del Pezzo surfaces, one obtains the explicit values computed by
   Derenthal in \cite{D} (see Example \ref{ex: dP} below). Other examples will be the calculation
   of nef cone volumes on surfaces obtained by blowing up the
   projective plane
   in points on a line and in infinitely near points
   (see Section \ref{sect: Bl up}).

   Note that the nef cone is always the closure of the nef chamber (non-big nef divisors
   have self-intersection zero, hence lie on the boundary of the nef cone). Therefore,
   the volume of the nef cone equals the volume of the Zariski chamber $\Sigma_H$ for
   any ample divisor $H$.
   The second main result of this note  deals with the volumes of the
   remaining Zariski chambers.
   We show that in fact knowledge of nef chamber volumes on the surfaces resulting
   from the contraction $\pi_S:X\to Y$ of sets $S$ of pairwise disjoint $(-1)$-curves
   suffices to calculate the volumes of arbitrary chambers:
   \begin{introtheorem}
      Let $X$ be a smooth projective surface with Picard number $\rho$, and let $P$ be
      a big and nef divisor on $X$ and $S=\set{E_1,\ldots,E_s}=\Null(P)$.
      Either $S$ contains a curve
      of self-intersection less than $-1$ and $\Vol(\Sigma_P)=\infty$, or $S$ consists
      of $s$ pairwise disjoint $(-1)$-curves and
      $$
         \Vol(\Sigma_P) = \frac{(\rho- s)!}{\rho!}\Vol(\Nef(\pi_S(X))).
      $$
   \end{introtheorem}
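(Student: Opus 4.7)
The plan is to treat the two alternatives separately. When $\Null(P)$ contains a curve $C$ with $C^2\leq -2$, I would exhibit an infinite cylinder inside $\overline{\Sigma_P}\cap(-K_X)^{\leq 1}$. By adjunction, $-K_X\cdot C = C^2 + 2 - 2g(C)\leq 0$. For any $D\in\Sigma_P$ with Zariski decomposition $D = P_D + N_D$, the divisor $D + tC$ (for $t\geq 0$) still lies in $\Sigma_P$: its Zariski decomposition is simply $P_D + (N_D + tC)$, since $P_D\cdot C = 0$ and the intersection matrix on $\Null(P)$ is negative definite by \cite{BKS}, so the support of the negative part is unchanged. Because $-K_X\cdot(D+tC)\leq -K_X\cdot D$, starting from a small ball $B\subset\Sigma_P\cap(-K_X)^{<1}$ (which is nonempty since $\Sigma_P$ is a cone and admits rescaling) produces $B + \R_{\geq 0}C\subset\overline{\Sigma_P}\cap(-K_X)^{\leq 1}$, of infinite $\rho$-dimensional volume.

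In the remaining case $\Null(P) = \set{E_1,\dots,E_s}$ consists of $s$ pairwise disjoint $(-1)$-curves. Let $\pi = \pi_S\colon X\to Y$ denote their simultaneous contraction, so $Y$ is smooth with Picard number $\rho - s$. The strategy is to parametrize $\overline{\Sigma_P}$ through its Zariski decomposition: every $D\in\overline{\Sigma_P}$ is uniquely of the form
$$
   D \;=\; \pi^*Q + \sum_{i=1}^s a_iE_i,\qquad Q\in\Nef(Y),\ a_i\geq 0,
$$
because the condition $P_D\cdot E_i = 0$ forces the positive part to come from $Y$ by pullback. Combining $K_X = \pi^*K_Y + \sum E_i$ with $E_i\cdot E_j = -\delta_{ij}$ and the projection formula yields
$$
   -K_X\cdot D \;=\; -K_Y\cdot Q + \sum_{i=1}^s a_i.
$$
Because the $E_i$ span an orthogonal direct summand of $N^1(X)$, the decomposition $N^1(X) = \pi^*N^1(Y)\oplus\bigoplus_i\Z E_i$ is a lattice direct sum, so under the above parametrization the measure $ds$ factors as the product of $ds_Y$ on $N_\R^1(Y)$ and ordinary Lebesgue measure on $\R^s$.

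The remaining step is a Fubini/polar-coordinates computation. Integrating first over the $a$-simplex $\sset{a_i\geq 0,\ \sum a_i\leq 1 - (-K_Y\cdot Q)}$, which has $s$-dimensional volume $(1-(-K_Y\cdot Q))^s/s!$, gives
$$
   \Vol(\Sigma_P) \;=\; \int_{\mathcal C_{\Nef(Y)}}\frac{\bigl(1-(-K_Y)\cdot Q\bigr)^s}{s!}\,ds_Y(Q).
$$
I would then introduce homogeneous coordinates on the cone $\Nef(Y)$, writing $Q = tQ'$ with $-K_Y\cdot Q'=1$, so that $ds_Y = t^{\rho-s-1}\,dt\,d\sigma(Q')$; the same computation with integrand $1$ gives $\Vol(\Nef(Y)) = (\rho-s)^{-1}\int d\sigma$. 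The radial integral collapses to a Beta function, $\int_0^1(1-t)^s t^{\rho-s-1}\,dt = s!(\rho-s-1)!/\rho!$, and multiplying out produces exactly the claimed factor $(\rho-s)!/\rho!$. I expect the main technical hinge to be verifying that $\pi^*$ embeds $N^1(Y)$ as an orthogonal direct summand of $N^1(X)$ at the level of lattices (standard for contractions of disjoint $(-1)$-curves); once this is in place, the $ds$-measure factors cleanly and the calculation is routine.
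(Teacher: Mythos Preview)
Your argument is correct and takes a genuinely different route in the main case. For the infinite-volume alternative the paper is terser: it just observes that the ray $\R^+[C]\subset\overline\Sigma_P$ misses the hyperplane $(-K_X)^{=1}$, so $\mathcal C_{\Sigma_P}$ is non-compact. For the formula, the paper argues geometrically rather than by integration. Via Proposition~\ref{prop: chamber structure} it identifies $\overline{\Sigma_P}$ as the cone spanned by $\Face(P)=\pi_S^*(\Nef(Y))$ together with $E_1,\dots,E_s$, and then builds $\mathcal C_{\Sigma_P}$ as an $s$-fold iterated pyramid $P_j=\mathrm{conv}(P_{j-1},E_j)$ over $P_0=\mathcal C_{\Face(P)}$. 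In the lattice basis $\pi_S^*C_1,\dots,\pi_S^*C_{\rho-s},E_1,\dots,E_s$ each $E_j$ is orthogonal to the affine span of $P_{j-1}$ with $\|E_j\|_2=1$, so $\Vol(P_j)=\tfrac{1}{\rho-s+j}\Vol(P_{j-1})$ and the product telescopes to $(\rho-s)!/\rho!$. Your Fubini/Beta computation encodes the same decomposition, with the bonus of the intermediate identity $\Vol(\Sigma_P)=\tfrac{1}{s!}\int_{\mathcal C_{\Nef(Y)}}(1+K_Y\cdot Q)^s\,ds_Y$; the pyramid argument is shorter and avoids calculus.

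One small point to patch: writing $Q=tQ'$ with $-K_Y\cdot Q'=1$ presupposes that $-K_Y$ is big on $Y$. When it is not, $\Vol(\Nef(Y))=\infty$ by Proposition~\ref{criterion finiteness}, and since $\mathcal C_{\Sigma_P}$ contains the product of $\pi_S^*(\tfrac12\mathcal C_{\Nef(Y)})$ with the simplex $\{a_i\ge 0,\ \sum a_i\le\tfrac12\}$, one gets $\Vol(\Sigma_P)=\infty$ as well, so the identity holds trivially; you should say so.
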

   We give two applications of Theorem 2: first, we use it in
   Section \ref{delPezzo} to determine all chamber volumes on del Pezzo
   surfaces. An interesting aspect here is that chambers of the
   same support size (the number $s$ appearing in Theorem 2) can
   lead to non-isomorphic surfaces by
   blow-down -- and precisely this geometric difference can be
   detected from the chamber volumes. As a second application, we
   study chambers on certain surfaces with big but non-ample
   anticanonical divisor (Sections \ref{s:big} and \ref{subsect:infinitely-near}).

   Throughout this paper we work over the complex numbers.
   We would like to thank the referee for his
   valuable comments.


\section{Zariski chamber volumes}

   For the Zariski chamber decomposition we follow the notation from \cite{BKS}:
   for a big and nef divisor $P$ on $X$ we consider the set $\Null(P)$ of
   irreducible curves having intersection zero with $P$. Note that by the
   index theorem the intersection matrix of the curves in $\Null(P)$ must be
   negative definite. The chamber  $\Sigma_P$ corresponding to $P$ is defined as the set
   of all big divisor classes $D$ such that the support  of the negative part in the
   Zariski decomposition of $D$, denoted by $\Neg(D)$, equals $\Null(P)$.
   In \cite{BKS} it is shown that two chambers $\Sigma_P$ and $\Sigma_{P'}$
   either coincide or are disjoint, and that all of the big cone is covered by
   the union of all Zariski chambers. Furthermore we consider the set $\Face(P)$
   defined as the intersection of the nef cone with $\Null(P)^\bot$, the set of
   divisor classes $D$ having intersection zero with all elements of $\Null(P)$.
   If $\Face(P)$ is contained in $\Bigcone(X)$, then $\Face(P)$
   turns out to be the lowest dimensional face of the nef cone containing $P$
   (see \cite[Remark 1.5]{BKS}). We will frequently use the following
\begin{proposition}\label{prop: chamber structure}
   The closure $\overline{\Sigma}_P$ of the Zariski chamber corresponding to a big and nef
   divisor $P$ is the convex cone spanned by $\Face(P)$ and the curves in $\Null(P)$.
\end{proposition}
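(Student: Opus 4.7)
The plan is to prove the two inclusions separately. For $\overline{\Sigma}_P \subseteq \mathrm{cone}(\Face(P),\Null(P))$, I would take an arbitrary $D \in \Sigma_P$ and look at its Zariski decomposition $D = P_D + N_D$. By the definition of $\Sigma_P$ the support of $N_D$ equals $\Null(P)$, so $N_D$ is a non-negative combination of the curves in $\Null(P)$. The positive part $P_D$ is nef, and the orthogonality property intrinsic to the Zariski decomposition gives $P_D \cdot E = 0$ for every $E \in \Null(P)$; hence $P_D \in \Face(P)$. Thus every element of $\Sigma_P$ lies in the convex cone spanned by $\Face(P)$ and $\Null(P)$, and passing to closures yields the desired containment.

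For the reverse inclusion, given $D = F + \sum_i a_i E_i$ with $F \in \Face(P)$, $a_i \ge 0$, and the $E_i$ ranging over $\Null(P)$, I would produce an approximating family in $\Sigma_P$ by setting
\[ D_\epsilon := (F + \epsilon P) + \sum_{E_i \in \Null(P)} (a_i + \epsilon)\, E_i . \]
For $\epsilon > 0$, the class $F + \epsilon P$ is big (since $P$ is) and nef, and it is orthogonal to every curve in $\Null(P)$: $F$ is because $F \in \Face(P) \subseteq \Null(P)^\perp$, and $P$ is by the very definition of $\Null(P)$. The intersection matrix of $\Null(P)$ is negative definite by the Hodge index theorem, so uniqueness of the Zariski decomposition identifies $F + \epsilon P$ as the positive part of $D_\epsilon$ and $\sum(a_i + \epsilon) E_i$ as its negative part, whose support is exactly $\Null(P)$. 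Therefore $D_\epsilon \in \Sigma_P$, and letting $\epsilon \to 0$ gives $D \in \overline{\Sigma}_P$.

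The main subtlety I anticipate is the choice of perturbation: a perturbation by a generic ample class would destroy orthogonality with $\Null(P)$ and so spoil the identification of the positive part, while a perturbation only by $\epsilon \sum E_i$ would fail to make the candidate positive part big when $F$ lies on the boundary of $\Bigcone(X)$. Combining the two into $\epsilon P + \epsilon \sum E_i$ resolves both difficulties at once. A minor bookkeeping point is the closedness of $\mathrm{cone}(\Face(P),\Null(P))$ itself, needed to interpret the proposition's equality of sets; this can be checked directly from the negative-definiteness of the intersection form on $\Null(P)$, which makes the decomposition $D = F + \sum \mu_i E_i$ unique and continuous in $D$.
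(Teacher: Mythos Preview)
Your argument is correct. The Zariski-decomposition bookkeeping in the first inclusion is exactly right, and for the reverse inclusion your perturbation $D_\epsilon=(F+\epsilon P)+\sum(a_i+\epsilon)E_i$ does what is needed: $F+\epsilon P$ is nef (sum of nef classes), big (big plus nef is big), and orthogonal to every $E_i\in\Null(P)$, while the $E_i$-part has full support and negative definite intersection matrix, so the three defining properties of the Zariski decomposition pin down $\Neg(D_\epsilon)=\Null(P)$. Your closing remark on the closedness of $\mathrm{cone}(\Face(P),\Null(P))$ via the direct-sum splitting $N^1_\R(X)=\Null(P)^\perp\oplus\langle E_1,\dots,E_s\rangle$ is also fine.

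The paper, by contrast, does not argue from scratch: it simply invokes \cite[Proposition~1.10]{BKS} (which describes $\Sigma_P$ itself as the set of classes whose Zariski decomposition has positive part in the relative interior of $\Face(P)$ and strictly positive coefficients on $\Null(P)$) and then passes to the closure. Your proof is essentially an explicit unpacking of that citation; the perturbation step you devise is precisely what shows that every boundary point of the cone is a limit of interior points of $\Sigma_P$. So the two approaches coincide in substance, with yours being self-contained and the paper's deferring the content to \cite{BKS}.
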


\begin{proof}
   This follows from \cite[Proposition 1.10]{BKS} by taking the closure.
\end{proof}

   Upon choosing a lattice basis of $N^1(X)$, the euclidean vector space $N^1_\R(X)$
   is equipped with a norm $\|\cdot\|_2$ coming from the scalar product.
   For the contraction $\pi_S:X\to Y$ of a set $S$ of disjoint $(-1)$-curves we consider
   the map $\pi_S^\ast:N^1_\R(Y)\to N^1_\R(X)$ given by pulling back divisors. The
   pull-back of any fixed lattice basis $B$ of $N^1(Y)$ is extended to a lattice basis $B'$
   of $N^1(X)$ by the elements of $S$. The map $\pi_S^\ast$ embeds $N^1_\R(Y)$  into
   $N^1_R(X)$ isometrically with regard to the bases $B$ and $B'$. Note furthermore that
   for any $E\in S$ the hyperplane $E^\bot$ given by divisor classes having intersection zero
   with $E$ coincides with the hyperplane of vectors orthogonal to $E$ with respect to the scalar
   product once $B'$ has been fixed as lattice basis.

\begin{proposition}\label{intersecting is contracting}
   Let $S=\set{E_1,\ldots,E_s}$ be a set of pairwise disjoint $(-1)$-curves on a smooth
   projective
   surface $X$. Then
   $$
      \Nef(X) \cap S^\bot = \pi_S^\ast(\Nef(\pi_S(X))).
   $$
\end{proposition}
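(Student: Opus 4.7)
The plan is to prove the two inclusions separately, both via the projection formula and the standard identification of $N^1(Y)$ with $S^\bot\subset N^1(X)$ induced by the contraction of disjoint $(-1)$-curves.

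For the inclusion $\supseteq$, I would start with a nef class $N$ on $Y=\pi_S(X)$ and show that $D:=\pi_S^*(N)$ is nef and satisfies $D\cdot E_i=0$ for $i=1,\dots,s$. The latter is immediate from the projection formula because $(\pi_S)_*E_i=0$. For nefness, I would test $D$ against an arbitrary irreducible curve $C\subset X$: if $C\in S$ the intersection is $0$, and otherwise $(\pi_S)_*C$ is an irreducible curve on $Y$, so the projection formula gives $D\cdot C=N\cdot(\pi_S)_*C\ge 0$. This part is routine.

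For the inclusion $\subseteq$, let $D\in\Nef(X)\cap S^\bot$. Since the $E_i$ are pairwise disjoint $(-1)$-curves, the contraction $\pi_S$ is a composition of successive blow-downs, and the pull-back $\pi_S^*$ identifies $N^1(Y)$ with the orthogonal complement $S^\bot$ inside $N^1(X)$ (this is the standard description of the N\'eron--Severi group under blowing up a smooth point: $N^1(X)=\pi_S^*N^1(Y)\oplus\bigoplus_i\Z[E_i]$, an orthogonal decomposition). Hence there exists a unique class $N\in N^1_\R(Y)$ with $\pi_S^*N=D$. It then remains to verify that $N$ is nef. For this I would take an arbitrary irreducible curve $C'\subset Y$ and let $C\subset X$ denote its strict transform; since $(\pi_S)_*C=C'$, the projection formula yields
$$
 N\cdot C' \;=\; N\cdot(\pi_S)_*C \;=\; \pi_S^*N\cdot C \;=\; D\cdot C \;\ge\; 0,
$$
because $D$ is nef. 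Thus $N\in\Nef(Y)$, completing the inclusion.

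I do not expect a serious obstacle here; the only point where one must be careful is the existence of $N$ with $\pi_S^*N=D$, which relies on the orthogonal decomposition of $N^1(X)$ under the blow-down of disjoint $(-1)$-curves. Once this is noted, both inclusions follow from the projection formula applied to irreducible curves on $X$ (respectively $Y$).
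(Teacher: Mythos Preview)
Your proof is correct and follows essentially the same approach as the paper: both arguments rest on the identification of $S^\bot$ with $\pi_S^\ast N^1_\R(Y)$ coming from the orthogonal decomposition under blow-up, together with the projection formula to transfer nefness in both directions. The only organizational difference is that the paper reduces to the case $s=1$ and proceeds by induction, whereas you treat general $s$ directly; the content is the same.
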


\begin{proof}
   We prove the result for the case $s=1$ and the assertion follows inductively.
   Consider the surjective morphism of smooth surfaces
   $$
      \pi_E: X \to Y
   $$
   given by the contraction of the $(-1)$-curve $E\in S$.
   Any divisor
   $D\in E^\bot$ on $X$ is the pull-back of a divisor $\bar D$ on $\pi_E(X)$, and for all
   divisors $F$ on $X$ we have the projection formula
   $$
      D\cdot  F = \bar D\cdot \pi_E (F),
   $$
   implying that $D$ is nef if and only if $\bar D$ is. Furthermore, the pull-back of any
   curve in $Y$ obviously lies in the hyperplane $E^\bot$.
\end{proof}

   \begin{cor}

      Let $P$ be a big and nef divisor such that all curves in $\Null(P)$ are $(-1)$-curves
      and let $S=\set{E_1,\ldots,E_s}$ be a subset of $\Null(P)$. Then
      $$
         \Sigma_P \cap S^\bot = \pi_S^\ast(\Sigma_{\pi_S(P)}).
      $$
   \end{cor}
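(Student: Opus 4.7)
The plan is to describe both sides of the claimed equality via Proposition~\ref{prop: chamber structure} combined with Proposition~\ref{intersecting is contracting} and the projection formula. The statement must be read on closures: for any $D\in\Sigma_P$ one has $D\cdot E_i = -c_i < 0$, where $c_i>0$ is the coefficient of $E_i\in S\subset\Null(P) = \Neg(D)$ in the negative part of $D$ (using that the nef part is orthogonal to $\Neg(D)$). Hence the open chamber does not meet $S^\bot$ at all.

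First I would observe that $\Null(P)$ consists of pairwise disjoint $(-1)$-curves: the intersection matrix is negative definite by the index theorem, and any $2\times 2$ principal submatrix has diagonal entries $-1$, so its determinant $1-a^2$ (with $a\ge 0$ the intersection of two of the curves) is positive only if $a=0$. In particular $\pi_S$ is well defined. By Proposition~\ref{prop: chamber structure}, every element of $\overline{\Sigma}_P$ has the form
$$
   F + \sum_{E_i\in S} a_i E_i + \sum_{E''\in\Null(P)\setminus S} b'' E''
$$
with $F\in\Face(P)$ and $a_i,b''\ge 0$. Since $F\cdot E_k=0$ (because $\Face(P)\subset\Null(P)^\bot$), $E''\cdot E_k=0$ (disjointness), and $E_i\cdot E_k=-\delta_{ik}$, intersecting with $E_k\in S$ gives $-a_k$. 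Membership in $S^\bot$ therefore forces $a_1=\dots=a_s=0$, yielding
$$
   \overline{\Sigma}_P \cap S^\bot \;=\; \Face(P) + \sum_{E''\in\Null(P)\setminus S}\R_{\ge 0}\cdot E''.
$$

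Next I would analyse the right-hand side. Setting $Y=\pi_S(X)$ and $\bar P=\pi_S(P)$, Proposition~\ref{prop: chamber structure} on $Y$ gives $\overline{\Sigma}_{\bar P} = \Face(\bar P) + \sum_{C\in\Null(\bar P)}\R_{\ge 0}\cdot C$. The projection formula $\pi_S^\ast(\bar D)\cdot D' = \bar D\cdot\pi_{S,*}(D')$ produces a bijection $\Null(\bar P)\leftrightarrow\Null(P)\setminus S$ via strict transform, and for each such $C$ the pullback carries no exceptional correction: writing $\pi_S^\ast(C)=\tilde C+\sum_j m_{p_j}(C)E_j$ and computing $\pi_S^\ast(C)\cdot E_i$ in two ways forces $m_{p_i}(C) = \tilde C\cdot E_i = 0$, so $\pi_S^\ast(C)=\tilde C$. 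For the face term, injectivity of $\pi_S^\ast$ together with Proposition~\ref{intersecting is contracting} give
$$
   \pi_S^\ast(\Face(\bar P)) = \pi_S^\ast(\Nef(Y))\cap\pi_S^\ast(\Null(\bar P)^\bot) = (\Nef(X)\cap S^\bot)\cap\Null(P)^\bot = \Face(P),
$$
where $\pi_S^\ast(\Null(\bar P)^\bot) = \Null(P)^\bot$ again follows from the projection formula. Applying the linear map $\pi_S^\ast$ to the cone decomposition of $\overline{\Sigma}_{\bar P}$ then reproduces the right-hand side of the first computation.

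The main obstacle is this last identification—matching the face piece and each extremal ray on $Y$ with its counterpart on $X$ under $\pi_S^\ast$. All subtleties reduce to two inputs, pairwise disjointness of the $(-1)$-curves in $\Null(P)$ and the projection formula, after which the termwise comparison is automatic.
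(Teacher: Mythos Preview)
Your argument is correct and follows essentially the same route as the paper: both proofs identify $\Face(P)$ with $\pi_S^\ast(\Face(\pi_S(P)))$ via Proposition~\ref{intersecting is contracting} and match $\Null(P)\setminus S$ with $\pi_S^\ast(\Null(\pi_S(P)))$ via the projection formula, then invoke Proposition~\ref{prop: chamber structure}. The only differences are cosmetic: the paper reduces to $s=1$ by induction whereas you treat all $s$ at once, and you make explicit (helpfully) that the identity must be read on closures and that the curves in $\Null(P)$ are pairwise disjoint.
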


\begin{proof}
   As above it suffices to consider the case $s=1$.

   Remember that $\Face(P)$ is given as
   the intersection of $\Null(P)^\bot$ and the nef cone $\Nef(X)$. Now, the intersection of
   the nef cone on $X$ with the hyperplane $E^\bot$ corresponds to the nef cone on
   $\pi_E(X)$  via $\pi_E^\ast$ by virtue of the proposition above. On the other hand
   $$
      \Null(P)\cap E^\bot = \Null(P) - \set{E} = \pi_E^\ast(\Null(\pi_E(P))),
   $$
   which implies the identity $\Face(P)=\pi^\ast(\Face(\pi_E(P)))$. This, together with
   Proposition \ref{prop: chamber structure}, completes the proof.
\end{proof}

   Let us now prove our first main result, which shows that the calculation of volumes of Zariski chambers
   can be reduced to the calculation of nef cone volumes.

\begin{theorem}\label{th chamber volume}
   Let $X$ be a smooth projective surface with Picard number $\rho$, and let $P$ be a big and nef
   divisor on $X$ and $S=\set{E_1,\ldots,E_s}=\Null(P)$. Either $S$ contains a curve
   of self-intersection less than $-1$ and $\Vol(\Sigma_P)=\infty$, or $S$ consists
   of $s$ pairwise disjoint $(-1)$-curves and
   $$
      \Vol(\Sigma_P) = \frac{(\rho- s)!}{\rho!}\Vol(\Nef(\pi_S(X))).
   $$
\end{theorem}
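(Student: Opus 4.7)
The plan is to use Propositions~\ref{prop: chamber structure} and~\ref{intersecting is contracting} to exhibit $\overline{\Sigma}_P$ as the image of a Euclidean product under a measure-preserving linear map, and then to evaluate the cone-volume integral by a Dirichlet-type identity.

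I would first dispose of the dichotomy. If some $E_i\in S$ satisfies $E_i^2\le-2$, then adjunction gives $-K_X\cdot E_i=E_i^2+2-2p_a(E_i)\le 0$. Picking any big and nef $Q\in\Sigma_P$, rescaling to $-K_X\cdot Q\le\tfrac12$, and taking a small open ball $V\subset\Sigma_P\cap(-K_X)^{\le2/3}$ around it, the convexity of $\overline{\Sigma}_P$ together with the sign above gives $V+\R_{\ge0}E_i\subset\mathcal{C}_{\Sigma_P}$; Fubini then yields infinite $ds$-volume.

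Assume now $S=\{E_1,\ldots,E_s\}$ consists of pairwise disjoint $(-1)$-curves, and set $Y:=\pi_S(X)$. Proposition~\ref{intersecting is contracting} identifies $\Face(P)=\Nef(X)\cap S^\bot$ with $\pi_S^\ast(\Nef(Y))$, so by Proposition~\ref{prop: chamber structure} the closure $\overline{\Sigma}_P$ is the cone spanned by $\pi_S^\ast(\Nef(Y))$ together with the rays $\R_{\ge0}E_i$. Extending a lattice basis of $N^1(Y)$ by $E_1,\ldots,E_s$ yields a lattice basis of $N^1(X)$, so the linear bijection
\[\Phi:N^1_\R(Y)\times\R^s\to N^1_\R(X),\quad(N,t_1,\ldots,t_s)\mapsto\pi_S^\ast(N)+\sum_i t_iE_i\]
is measure-preserving for the lattice-normalized Lebesgue measures. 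Using $K_X=\pi_S^\ast K_Y+\sum_i E_i$, the projection formula, and $E_i\cdot E_j=-\delta_{ij}$, a short intersection computation in which the cross terms $\pi_S^\ast K_Y\cdot E_i$ and $\pi_S^\ast N\cdot E_i$ vanish yields
\[(-K_X)\cdot\Phi(N,t_1,\ldots,t_s)=(-K_Y)\cdot N+\sum_i t_i.\]

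Pulling $\mathcal{C}_{\Sigma_P}$ back through $\Phi$ turns $\Vol(\Sigma_P)$ into the integral of $1$ over $\{N\in\Nef(Y),\,t_i\ge0,\,(-K_Y)\cdot N+\sum_i t_i\le1\}$. Fixing $(t_i)$ with $\sum_i t_i\le1$ and exploiting the homogeneity of $\Nef(Y)$, the $N$-slice is $(1-\sum_i t_i)\cdot\mathcal{C}_{\Nef(Y)}$, which has $(\rho-s)$-dimensional volume $(1-\sum_i t_i)^{\rho-s}\Vol(\Nef(Y))$. The remaining simplex integral $\int_{t_i\ge0,\,\sum t_i\le1}(1-\sum t_i)^{\rho-s}\,dt$ is a standard Dirichlet integral equal to $(\rho-s)!/\rho!$, giving the stated formula (the identity extends to the case $\Vol(\Nef(Y))=\infty$ since both sides are then infinite). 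The hardest step will be the careful bookkeeping verifying that $\Phi$ is measure-preserving for the lattice-normalized measures together with the intersection arithmetic for $(-K_X)\cdot\Phi$; once these are in place, the remaining evaluation is essentially routine.
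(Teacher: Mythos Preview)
Your argument is correct and follows the same structural skeleton as the paper: establish the dichotomy via adjunction, then use Propositions~\ref{prop: chamber structure} and~\ref{intersecting is contracting} to identify $\overline{\Sigma}_P$ with the cone over $\pi_S^\ast(\Nef(Y))$ together with the $E_i$. One small slip: in the infinite-volume case you write ``picking any big and nef $Q\in\Sigma_P$,'' but when $S\neq\emptyset$ no element of $\Sigma_P$ is nef (its negative part is supported on $S$); what you actually need, and what makes the ball $V$ exist, is an \emph{interior} point of $\Sigma_P$, which is available since $\overline{\Sigma}_P$ is full-dimensional.

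Where you genuinely diverge from the paper is in the volume computation. The paper builds $\mathcal{C}_{\Sigma_P}$ as an iterated pyramid $P_j=\mathrm{conv}(P_{j-1},E_j)$, observing that each $E_j$ has unit distance to the affine hull of $P_{j-1}$ in the lattice basis, so that $\Vol(P_j)=\frac{1}{\rho-s+j}\Vol(P_{j-1})$; telescoping gives the factorial ratio. You instead pull the whole region back through the measure-preserving map $\Phi$ to a product $\Nef(Y)\times\R_{\ge0}^s$, use the identity $(-K_X)\cdot\Phi(N,t)=(-K_Y)\cdot N+\sum t_i$, and evaluate by homogeneity plus a Dirichlet integral. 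Your route is a bit more systematic and makes the $\Vol(\Nef(Y))=\infty$ case transparent (each slice already has infinite measure), whereas the paper's pyramid description tacitly assumes the base $\mathcal{C}_{\Face(P)}$ is bounded. The paper's iterated-cone picture, on the other hand, is more geometric and avoids any analysis beyond the pyramid volume formula. Both reach $\frac{(\rho-s)!}{\rho!}$ by what is ultimately the same factorization.
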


\begin{proof}
   The case $s=0$ is trivial, so assume that $S$ is non-empty.
   Note that since $S=\Null(P)$ has negative definite intersection matrix, the alternatives
   really constitute a dichotomy. Now, suppose there exists an irreducible curve
   $C\in S$ with $C^2 < -1$. By adjunction we have
   $$
      -K_X\cdot C \le 0,
   $$
   i.e., the hyperplane $(-K_X)^{=1}$ does not intersect the ray $\R^+\cdot [C]$ which is
   contained in $\Sigma_P$. Therefore, $\Sigma_P$ has infinite volume.

   If $S$ consists of pairwise disjoint $(-1)$-curves, we know by Proposition
   \ref{prop: chamber structure} that $\bar{\Sigma}_P$ is the convex cone spanned by $\Face(P)$ and the
   curves of $S$. By Proposition \ref{intersecting is contracting}
   we have
   \begin{eqnarray*}
      \Face(P) &=& \Nef(X) \cap S^\bot\\
      &=& \pi_S^\ast(\Nef(\pi_S(X))).
   \end{eqnarray*}
   Additionally, for $E\in S$ and for a divisor $D$ on $X$ with $D\cdot E=0$ we have
   $-K_X\cdot D= -K_{\pi_s(X)}\cdot \bar D$, where
   $D= \pi_S^\ast(\bar D)$, implying that $\mathcal C_{\Nef(X)}\cap S^\bot$
   can be identified via $\pi_S^\ast$ with $\mathcal C_{\Nef(\pi_S(X))}$.
   For a lattice basis $C_1,\ldots,C_{\rho-s}$ of $N^1(\pi_S(X))$
   the vectors
   $$
      \pi_S^\ast(C_1),\ldots,\pi_S^\ast(C_{\rho-s}),E_1,\ldots,E_s
   $$
   form a basis of the lattice $N^1(X)$.
   Consider the polytopes
   \begin{eqnarray*}
      P_1 &:=& \mbox{conv} (\mathcal C_{\Face(P)},E_1)\\
      P_j &:=& \mbox{conv} (P_{j-1}, E_j) \qquad 2 \le j \le s,
   \end{eqnarray*}
   where $\mbox{conv}$ denotes the convex hull.
   Each $P_j$ is a $(\rho-s+j)$-dimensional pyramid with base $P_{j-1}$ and vertex
   $E_j$. The vector $E_j$ is perpendicular to the subspace
   $E_1^\bot\cap \ldots\cap E_{j-1}^\bot$ containing $P_{j-1}$. Furthermore, due to
   the choice of the basis we have $\|E_j\|_2=1$. Therefore the pyramid $P_j$ has volume
   $$
      \Vol(P_j) = \frac1{\rho-s+j} \Vol(P_{j-1}).
   $$
   Iterating this calculation eventually yields
   \begin{eqnarray*}
      \Vol(\Sigma_P) = \Vol(P_s)
      &=&    \frac1{\rho-s+1}\cdot\ldots\cdot\frac1{\rho-s+s}\Vol(\Face(P))\\
      &=& \frac{(\rho - s)!}{\rho!}\Vol(\Nef(\pi_S(X))).
   \end{eqnarray*}
\end{proof}


\section{Del Pezzo surfaces}\label{delPezzo}

   We will now show that Theorem \ref{th chamber volume} enables us to compute the volumes of all
   Zariski chambers on del Pezzo surfaces, i.e., on
   surfaces $X$ with ample anticanonical divisor $-K_X$. The classification of
   del Pezzo surfaces is well known: either $X$ is the projective plane
   $\P^2$, or $\PtP$, or a blow-up $S_r$ of $\P^2$ in $1\le r\le 8$ points in general
   position\footnote{In this case \emph{in general position} means that no three of
   the points are collinear, no six lie on a conic and no eight on a cubic
   with one of them a double point.}. The degree of a del Pezzo surface
   is defined as the self-intersection of the anticanonical divisor. We have
   $$
      (-K_{\P^2})^2 = 9, \quad (-K_{\PtP})^2 = 8, \quad (-K_{S_r})^2 = 9-r.
   $$
\begin{lemma}\label{lemma contracting}
   Let $S_r$ be a del Pezzo surface with $1\le r\le8$ and $E$ a
   $(-1)$-curve on $S_r$. Then $E$ is contracted to a point on
   a del Pezzo surface $Y$ of degree $9-r+1$ by a birational
   morphism
   $$
      \pi_E : S_r\to Y.
   $$
   In particular
   \begin{eqnarray}
      N^1(S_r) &=& N^1(Y) \oplus \Z [E],\\
      -K_{S_r}&=&-K_Y-E.
   \end{eqnarray}
\end{lemma}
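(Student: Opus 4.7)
The plan is to invoke Castelnuovo's contractibility theorem to produce $\pi_E$, to read off both (1.6) and (1.7) from the standard blow-up identities, to compute the degree of $Y$ by squaring the canonical relation, and finally to verify ampleness of $-K_Y$ via Nakai--Moishezon.

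First, since $E$ is a $(-1)$-curve, Castelnuovo supplies a smooth projective surface $Y$ together with a birational morphism $\pi_E : S_r \to Y$ realizing $S_r$ as the blow-up of $Y$ at the smooth point $p := \pi_E(E)$. The familiar Picard decomposition $N^1(S_r) = \pi_E^\ast N^1(Y) \oplus \Z[E]$ and the canonical formula $K_{S_r} = \pi_E^\ast K_Y + E$ then give (1.6) and (1.7) once $N^1(Y)$ is identified with $\pi_E^\ast N^1(Y)$. Squaring (1.7), using $\pi_E^\ast K_Y \cdot E = 0$ and $E^2 = -1$, yields $(-K_{S_r})^2 = (-K_Y)^2 - 1$, so $(-K_Y)^2 = 10-r$, matching the claimed degree.

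The main obstacle is establishing that $-K_Y$ is ample; nothing in the construction so far guarantees this. I would invoke Nakai--Moishezon. Positivity of $(-K_Y)^2 = 10 - r$ is automatic from $r \le 8$. For an arbitrary irreducible curve $C \subset Y$, write $\pi_E^\ast C = \tilde C + m E$, where $\tilde C$ is the proper transform and $m \ge 0$ is the multiplicity of $C$ at $p$. A short calculation using (1.7) together with the projection formula (which kills $E \cdot \pi_E^\ast C$) gives $-K_Y \cdot C = -K_{S_r} \cdot \tilde C + m$, which is strictly positive because $-K_{S_r}$ is ample on $S_r$. This intersection-theoretic step is the only real content beyond standard blow-up bookkeeping.
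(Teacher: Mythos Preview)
Your argument is correct and follows essentially the same route as the paper: Castelnuovo for the contraction, the standard blow-up identities for (1) and (2), and Nakai--Moishezon for ampleness of $-K_Y$. The paper computes $-K_Y\cdot C$ by intersecting with the total transform $\pi_E^\ast C$ directly (using $E\cdot\pi_E^\ast C=0$), while you decompose $\pi_E^\ast C=\tilde C+mE$ first; the two computations are equivalent. One small point: the paper begins by using adjunction and the ampleness of $-K_{S_r}$ to check that an irreducible curve with $E^2=-1$ on $S_r$ has arithmetic genus~$0$, hence is smooth rational, before invoking Castelnuovo. You take this as part of the definition of ``$(-1)$-curve''; depending on the convention in force, you may want to include that one-line genus check.
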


\begin{proof}
   For a curve $E$ of self-intersection $E^2=-1$ the adjunction formula
   combined with the ampleness of $-K_{S_r}$ reads
   $$
      0\le g(E) = 1+\frac12(E^2+EK_{S_r})\le0,
   $$
   implying that $E$ must be rational. By Castelnuovo's Contractibility
   Criterion, $E$ is contracted by a birational morphism $\pi_E$ to a point on
   a smooth surface $Y$. Regarding $S_r$ as the blow-up of $Y$ in a point
   with exceptional divisor $E$ renders the asserted identities obvious.
   It is now left to prove that $Y$ is del Pezzo of degree $9-r+1$. Consider the
   self-intersection
   $$
      (-K_Y)^2=(-K_{S_r}+E)^2=9-r+1 > 0.
   $$
   Furthermore, for any irreducible curve $C$ on $Y$ we have
   \begin{eqnarray*}
      (-K_Y\cdot C) &=& (\pi_E^\ast(-K_Y)\cdot\pi_E^\ast(C)) \\
      &=& ((-K_{S_r}-E)\cdot \pi_E^\ast(C)) \\
      &=& (-K_{S_r}\cdot \pi_E^\ast(C)) > 0.
   \end{eqnarray*}
   Consequently, $-K_Y$ is ample by the Nakai criterion.
\end{proof}

\begin{lemma}\label{ResultContraction}
   For $r\ge3$, contracting a $(-1)$-curve on $S_r$ results in the surface
   $S_{r-1}$. For a $(-1)$-curve $E$ on $S_2$, we have
   $\pi_E(S_2)=S_1$, if there exists a  $(-1)$-curve $E^\prime$ on $S_2$
   such that $(E\cdot E^\prime)=0$. Otherwise $\pi_E(S_2) = \P^1\times\P^1$.
\end{lemma}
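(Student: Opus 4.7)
The plan is to combine Lemma \ref{lemma contracting} with the classification of del Pezzo surfaces recalled at the beginning of this section. By that lemma, $Y := \pi_E(S_r)$ is a del Pezzo surface of degree $10-r$. For $r\geq3$ this degree lies in $\{2,3,4,5,6,7\}$, and the classification forces $Y\cong S_{r-1}$, which is the unique del Pezzo surface of such degree. This settles the first assertion.

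For $r=2$ the surface $Y$ has degree $8$ and is therefore either $S_1$ or $\PtP$. To distinguish these two possibilities using data visible on $S_2$, I would count $(-1)$-curves on $Y$: the surface $S_1$ contains exactly one such curve (its exceptional divisor), while $\PtP$ contains none, because the self-intersection of an irreducible curve of bidegree $(a,b)$ on $\PtP$ equals $2ab$, which cannot be $-1$. I then establish a bijection between $(-1)$-curves on $Y$ and $(-1)$-curves on $S_2$ that are disjoint from $E$. In one direction, if $C$ is a $(-1)$-curve on $S_2$ with $C\cdot E=0$, then $C\neq E$ and irreducibility forces $C$ to be set-theoretically disjoint from $E$, so $C = \pi_E^\ast(\bar C)$ for an irreducible curve $\bar C \subset Y$ with $\bar C^2 = C^2 = -1$. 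In the other direction, a $(-1)$-curve $\bar C$ on $Y$ cannot contain the center of $\pi_E$, for otherwise its strict transform would be a smooth rational curve of self-intersection $-2$, forcing $K_{S_2}$ to have intersection zero with it and contradicting the ampleness of $-K_{S_2}$; its total transform is then a $(-1)$-curve on $S_2$ disjoint from $E$. Consequently $Y = S_1$ precisely when $E$ admits a disjoint $(-1)$-curve on $S_2$, and $Y = \PtP$ otherwise.

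The main difficulty is organizational rather than computational: the argument rests on the classification of del Pezzo surfaces, which in degree $8$ is genuinely a dichotomy, and on transferring $(-1)$-curve information faithfully through the contraction $\pi_E$. The step deserving most care is the automatic non-incidence of a $(-1)$-curve on $Y$ with the contracted point, since without this fact the abstract classification of $Y$ would not translate into a criterion formulated solely in terms of curves on $S_2$.
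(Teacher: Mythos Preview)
Your proof is correct and follows essentially the same route as the paper's: invoke Lemma~\ref{lemma contracting} plus the classification to reduce the $r=2$ case to distinguishing $S_1$ from $\PtP$, then decide between them by checking whether $Y$ carries a $(-1)$-curve, which amounts to checking whether $S_2$ carries a $(-1)$-curve disjoint from $E$. The only cosmetic difference is that the paper argues the non-incidence step by observing that on a del Pezzo surface every irreducible curve has self-intersection $\ge -1$ (forcing the multiplicity $s$ of $\bar C$ at the center to satisfy $-1-s^2\ge -1$, hence $s=0$), whereas you phrase the same constraint via adjunction as $K_{S_2}\cdot\tilde C=0$; these are equivalent.
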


\begin{proof}
   The assertion for $r\ge3$ follows immediately from Lemma
   \ref{lemma contracting} and the classification of del Pezzo surfaces, since
   any del Pezzo surface of degree $9-r+1$ is a surface $S_{r+1}$.

   Let now $r=2$ and consider $Y:=\pi_E(S_2)$. By the classification of
   del Pezzo surfaces,
   $Y$ is either $\PtP$ or $S_1$. Suppose there is a $(-1)$-curve $E'$
   on $S_2$ disjoint from $E$. Then $E'$ is the pull-back of a $(-1)$-curve
   on $Y$. Since $\PtP$ is minimal, $Y$ must be a surface $S_1$.

   If, on the other hand, there is no $(-1)$-curve on $S_2$ disjoint from
   $E$, then $Y$ cannot contain a $(-1)$-curve either:
   for such a curve $C$ on $Y$, the transform $\tilde C$ on a blow-up
   $X$ of $Y$ in a point $p$ is an
   irreducible curve with self-intersection $\tilde C^2=C^2-s^2$, where
   $s$ denotes the order of $C$ in the point $p$. Now, if $X$ is del Pezzo,
   then $\tilde C^2$ is at least $-1$, hence $s=0$, and $\tilde C$ is a
   $(-1)$-curve not intersecting the exceptional curve $E$.
   Therefore, contracting a curve $E$ having positive intersection
   with the other $(-1)$-curves on $S_2$ results in $\PtP$.
\end{proof}

   We now apply our knowledge about the behaviour of del Pezzo surfaces
   under contractions to calculate the chamber volumes.
\begin{proposition}\label{prop: Vol dP}
   Let $P$ be a big and nef divisor on a del Pezzo surface $S_r$, $1\le r\le 8$,  and let
   $\Null(P)=\set{E_1,\ldots,E_k}$.
   If $k\neq r-1$, the Zariski chamber $\Sigma_P$ corresponding to $P$ has the volume
   \begin{equation}
      \Vol({\Sigma_P})=\frac{(r-k+1)!}{(r+1)!}\Vol({\Nef(S_{r-k})}),
   \end{equation}
   where $S_0:=\P^2$.
   Otherwise, i.e., for $k=r-1$,
   \begin{equation}
      \Vol({\Sigma_P})=\begin{cases}
      \frac1{4(r+1)!}   & \text{, if $E_1,\ldots,E_k$
      form a maximal negative definite system}\\
      \frac{1}{6(r+1)!} & \text{, otherwise}.
   \end{cases}
\end{equation}
\end{proposition}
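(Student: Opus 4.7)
The strategy is to reduce the proposition to Theorem~\ref{th chamber volume} by identifying the surface $Y := \pi_S(S_r)$ case by case via iterated use of Lemma~\ref{ResultContraction}, and then to compute the two basic nef cone volumes that arise. As a preliminary observation, the ampleness of $-K_{S_r}$ together with adjunction forces $C^2 \ge -1$ for every irreducible curve $C$ on $S_r$, so the elements of $S := \Null(P) = \{E_1,\ldots,E_k\}$ are all $(-1)$-curves and, by negative definiteness of their intersection matrix, pairwise disjoint. With $\rho = r+1$, Theorem~\ref{th chamber volume} yields
\[
   \Vol(\Sigma_P) = \frac{(r-k+1)!}{(r+1)!}\,\Vol(\Nef(Y)).
\]
When $k \le r-2$ every intermediate surface in a step-by-step contraction has index at least $3$, so the first clause of Lemma~\ref{ResultContraction} applies throughout and $Y = S_{r-k}$. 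When $k = r$, iterating the lemma (pairwise disjointness of the remaining curves ruling out $\PtP$ at the $S_2$-step) and finishing with the unique contraction $S_1 \to \P^2$ gives $Y = \P^2 = S_0$. In both situations the asserted formula for $k \ne r-1$ is immediate.

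The case $k = r-1$ is where the content lies. After $r-2$ contractions we sit on $S_2$ with a single remaining $(-1)$-curve $\bar E$ to contract, and Lemma~\ref{ResultContraction} gives $Y = S_1$ if some other $(-1)$-curve on $S_2$ is disjoint from $\bar E$ and $Y = \PtP$ otherwise. I claim this dichotomy matches the maximality condition on $S_r$. Since the del Pezzo surface $S_r$ contains no irreducible curve of self-intersection $\le -2$, extending $\{E_1,\ldots,E_{r-1}\}$ to a larger negative definite system amounts to adjoining a $(-1)$-curve $E^\ast$ disjoint from all $E_i$. Given such an $E^\ast$, its image on $S_2$ is a $(-1)$-curve disjoint from $\bar E$; conversely, a $(-1)$-curve $E''$ on $S_2$ disjoint from $\bar E$ must avoid the contracted points (otherwise its strict transform on $S_r$ would be irreducible of self-intersection $\le -2$, impossible on a del Pezzo), so it lifts to a $(-1)$-curve on $S_r$ disjoint from all $E_i$. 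Hence non-maximal corresponds to $Y = S_1$ and maximal to $Y = \PtP$.

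To finish, I would compute the two remaining nef cone volumes by direct coordinate calculations. For $\PtP$, with the rulings $F_1, F_2$ as lattice basis and $-K = 2F_1 + 2F_2$, the slice $\mathcal{C}_{\Nef(\PtP)} = \{a,\, b \ge 0,\ 2a + 2b \le 1\}$ is a right triangle of $ds$-area $1/8$. For $S_1$, using the lattice basis $(H, H-E)$ and the identity $-K \cdot (aH + b(H-E)) = 3a + 2b$, the slice becomes $\{a,\, b \ge 0,\ 3a + 2b \le 1\}$, a right triangle of $ds$-area $1/12$. Substituting $\Vol(\Nef(\PtP)) = 1/8$ and $\Vol(\Nef(S_1)) = 1/12$ into the displayed formula with $(r-k+1)! = 2$ produces $\tfrac{1}{4(r+1)!}$ in the maximal case and $\tfrac{1}{6(r+1)!}$ in the non-maximal case. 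The principal subtlety in the whole argument is the maximality equivalence in the preceding paragraph; once the absence of $(-n)$-curves for $n \ge 2$ on a del Pezzo surface is invoked, the remaining steps are routine applications of the earlier results together with an elementary area computation.
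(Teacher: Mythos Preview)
Your proof is correct and follows essentially the same route as the paper: both reduce to Theorem~\ref{th chamber volume}, identify the contracted surface via Lemma~\ref{ResultContraction}, and handle the $k=r-1$ case by the same maximality-versus-extra-disjoint-$(-1)$-curve dichotomy (the paper invokes the argument from the proof of Lemma~\ref{ResultContraction}, which is exactly your strict-transform reasoning). The only substantive addition is that you compute $\Vol(\Nef(\PtP))=1/8$ and $\Vol(\Nef(S_1))=1/12$ directly, whereas the paper simply quotes these values from Remark~\ref{Nef Vol DP}.
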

\begin{remark}\label{Nef Vol DP}\rm
   The following nef cone volumes $\Vol({\Nef(S_{r})})$ are calculated in \cite{D}.
   We will show in Example \ref{ex: dP} how to obtain these values as an application of Theorem 1.
   Note that Derenthal considers numbers $\alpha(S_r)$ which equal the nef
   cone volume multiplied by a dimensional factor $r+1$.
   In fact, $\alpha(S_r)$ is defined as the volume of the topmost \engq{slice}
   $\Nef(S_r)\cap K_{S_r}^{=1}$ of the polytope $\Nef(S_r)\cap K_{S_r}^{\le 1}$
   considered here.

   \begin{center}
      \begin{tabular}[t]{r|*{7}{c}c}\tline\centering
         $r$                                            &1 & 2& 3 & 4 & 5 & 6 & 7 & 8 \\ \tline
         $\Vol(\mathcal C_{\Nef(S_{r})})$&1/12&1/72&1/288&1/720&1/1080&1/840&1/240&1/9\\
         \tline
      \end{tabular}
   \end{center}
   Furthermore, $\Vol({\Nef(\P^2)})=\tfrac{1}{3}$ and
   $\Vol({\Nef(\P^1\times\P^1)})=\tfrac{1}{8}$.
   The proposition thus gives sufficient information to calculate
   the volumes of all Zariski chambers on del Pezzo surfaces
   (see below).
\end{remark}

\begin{proof}[Proof of Proposition \ref{prop: Vol dP}]

   For $k>r-1$ the asserted volume formula is a direct consequence
   of Theorem \ref{th chamber volume} together with Lemma
   \ref{ResultContraction} applied $k$ times.
   The contraction of $k=r$ pairwise disjoint $(-1)$-curves on $S_r$ results
   in a del Pezzo surface with Picard number 1, i.e., in $\P^2$. Thus in this
   case the result, again, follows immediately from Theorem \ref{th chamber volume}.
   In case $k=0$ the chamber $\Sigma_P$ is the nef chamber, whereby
   the assertion turns out to be trivial.

   Now, let us consider the remaining case $k=r-1$. Again, the formula
   essentially follows from Theorem
   \ref{th chamber volume}. What is still left to do is to establish
   whether \linebreak $E_1^\bot\cap\ldots\cap E_{r-1}^\bot\cap \Nef(S_r)$ is
   identified by $\pi^\ast$ with $\Nef(S_1)$ or with $\Nef(\PtP)$.
   The proof of Lemma \ref{ResultContraction} implies
   that the transform of every $(-1)$-curve on the surface $\pi_{E_1,\ldots,E_{r-1}}(S_r)$
   resulting from the contraction of $E_1,\ldots,E_{r-1}$ is itself a $(-1)$-curve on $S_r$
   not intersecting any of the $E_i$. It therefore forms a negative definite system together with
   the curves $E_i$. So, $\pi_{E_1,\ldots,E_{r-1}}(S_r)$ equals $\PtP$ if and only if
   $E_1\ldots,E_{r-1}$ form a maximal negative definite system and otherwise equals $S_1$.
\end{proof}

   In \cite{BFN} an algorithm was introduced that computes
   the number of Zariski chambers on a smooth surface with known
   negative curves by determining the number of negative definite
   principal submatrices of the intersection matrix of all negative curves.
   This algorithm can easily be modified in such a way that it returns the
   number of negative definite principal submatrices of a given size $s$.
   This number evidently equals the number of Zariski chambers $\Sigma_P$ whose
   support $\Null(P)$ contains $s$ curves.

   Note that for the chambers $\Sigma_P$ of support size $r-1$ the volume varies depending
   on whether the contraction of the curves in $\Null(P)$ yields the surface $\PtP$,
   or the surface $S_1$. From the algorithm we only obtain the overall number of chambers
   of a given support size. However, it is easy to show that on any of the
   surfaces $S_r$ exactly one third of the occurring chambers of support size $r-1$
   are of the first type: contracting any $r-2$ curves from Null$(P)$ results in a surface $S_2$,
   whose $(-1)$-curves have intersection matrix
   $$
      \matr{-1&1&1\\
      1 &-1&0\\
      1&0&-1}.
   $$
   Consequently the contraction of the first curve results in $\PtP$ and
   contracting either of the other two yields $S_1$.
   Since the surface resulting from iterated contraction of
   several $(-1)$-curves is independent of the order
   of contractions, the ratio between chambers of first to
   second type is one to two.

   The numbers and volumes of Zariski chambers on del Pezzo
   surfaces are displayed in tables \ref{S1} to \ref{S8} where
   the first and second columns indicate the support size $k$ and the
   surface $\pi_k(S_r)$ obtained by contracting the curves in $\Null(P)$.

   \begin{table}[ht]\centering
      \begin{tabular}[t]{cc|cc}\tline
         k &  $\pi_k(S_r)$   & number &   $\qquad\Vol\left(\Sigma_P\right)\qquad$\\\tline
         0 &  $S_1$          &  1  &          1/12   \\
         1 &  $\P^2$         &  1  &          1/6   \\\tline
      \end{tabular}
      \caption{Zariski chamber volumes on  $S_1$ \label{S1}}

   \end{table}

   \begin{table}[ht]\centering
      \begin{tabular}[t]{cc|cc}\tline
         k &  $\pi_k(S_r)$   & number &    $\qquad\Vol\left(\Sigma_P\right)\qquad$\\\tline
         0 &  $S_2$          &  1  &          1/72   \\
         1 &  $S_1$          &  2  &          1/36   \\
         1 &  $\PtP$         &  1  &          1/24   \\
         2 &  $\P^2$         &  1  &          1/18   \\\tline
      \end{tabular}
      \caption{Zariski chamber volumes on  $S_2$\label{S2}}

   \end{table}

   \begin{table}[ht]\centering
      \begin{tabular}[t]{cc|cc}\hline
         k &  $\pi_k(S_r)$     & number & $\qquad\Vol\left(\Sigma_P\right)\qquad$\\\tline
         0 &  $S_3$       &  1  &          1/288   \\
         1 &  $S_2$       &  6  &          1/288   \\
         2 &  $S_1$       &  6  &          1/144   \\
         2 &  $\PtP$      &  3  &          1/96  \\
         3 &  $\P^2$      &  2  &          1/72   \\\tline
      \end{tabular}
      \caption{Zariski chamber volumes on  $S_3$\label{S3}}

   \end{table}

   \begin{table}[ht]\centering
      \begin{tabular}[t]{cc|cc}\hline
         k &  $\pi_k(S_r)$   & number & $\qquad\Vol\left(\Sigma_P\right)\qquad$\\\tline
         0 &  $S_4$          &  1  &          1/720   \\
         1 &  $S_3$          &  10 &          1/1440  \\
         2 &  $S_2$          &  30 &          1/1440  \\
         3 &  $S_1$          &  20 &          1/720   \\
         3 &  $\PtP$         &  10 &          1/480   \\
         4 &  $\P^2$         &   5 &          1/360   \\\tline
      \end{tabular}
      \caption{Zariski chamber volumes on  $S_4$\label{S4}}

   \end{table}

   \begin{table}[ht]\centering
      \begin{tabular}[t]{cc|cc}\tline
         k &  $\pi_k(S_r)$   & number & $\qquad\Vol\left(\Sigma_P\right)\qquad$\\\tline
         0 &  $S_5$          &  1   &          1/1080 \\
         1 &  $S_4$          &  16  &          1/4320\\
         2 &  $S_3$          &  80  &          1/8640 \\
         3 &  $S_2$          &  160 &          1/8640 \\
         4 &  $S_1$          &  80  &          1/4320\\
         4 &  $\PtP$         &  40  &          1/2880 \\
         5 &  $\P^2$         &  16  &          1/2160 \\\tline
      \end{tabular}
      \caption{Zariski chamber volumes on  $S_5$ \label{S5}}

   \end{table}

   \begin{table}[ht]\centering
      \begin{tabular}[t]{cc|cc}\hline
         k &  $\pi_k(S_r)$   & number & $\qquad\Vol\left(\Sigma_P\right)\qquad$\\\tline
         0 &  $S_6$          &  1    &          1/840 \\
         1 &  $S_5$          &  27   &          1/7560 \\
         2 &  $S_4$          &  216  &          1/30240\\
         3 &  $S_3$          &  720  &          1/60480\\
         4 &  $S_2$          &  1080 &          1/60480\\
         5 &  $S_1$          &  432  &          1/30240\\
         5 &  $\PtP$         &  216  &          1/20160\\
         6 &  $\P^2$         &  72   &          1/15120\\\tline
      \end{tabular}
      \caption{Zariski chamber volumes on  $S_6$\label{S6}}

   \end{table}

   \begin{table}[ht]\centering
      \begin{tabular}[t]{cc|cc}\tline
         k &  $\pi_k(S_r)$   & number & $\qquad\Vol\left(\Sigma_P\right)\qquad$\\\tline
         0 &  $S_7$          &  1  &          1/240\\
         1 &  $S_6$          &  56 &          1/6720\\
         2 &  $S_5$          &  765 &          1/60480\\
         3 &  $S_4$          &  4032&          1/241920\\
         4 &  $S_3$          &  10080&          1/483840\\
         5 &  $S_2$          &  12096&          1/483840\\
         6 &  $S_1$          &  4032&          1/241920\\
         6 &  $\PtP$         &  2016&          1/161280\\
         7 &  $\P^2$         &  576&          1/120960\\\tline
      \end{tabular}
      \caption{Zariski chamber volumes on  $S_7$\label{S7}}

   \end{table}

   \begin{table}[ht]\centering
      \begin{tabular}[t]{cc|cc}\tline
         k &  $\pi_k(S_r)$   & number & $\qquad\Vol\left(\Sigma_P\right)\qquad$\\\tline
         0 &  $S_8$          &  1  &          1/9   \\
         1 &  $S_7$          &  240  &          1/2160   \\
         2 &  $S_6$          &  6720  &          1/60480   \\
         3 &  $S_5$          &  60480  &          1/544320   \\
         4 &  $S_4$          &  241920  &          1/2177280   \\
         5 &  $S_3$          &  483840  &          1/4354560   \\
         6 &  $S_2$          &  483840  &          1/4354560   \\
         7 &  $S_1$          &  138240  &          1/2177280   \\
         7 &  $\PtP$         &  69120  &          1/1451520   \\
         8 &  $\P^2$         &  17280  &          1/1088640   \\\tline
      \end{tabular}
      \caption{Zariski chamber volumes on  $S_8$\label{S8}}

   \end{table}


\section{Big anticanonical surfaces}\label{sect: Bl up}


\subsection{Finiteness of nef chamber volume}

   As we have seen, the calculation of Zariski chamber volumes $\Vol(\Sigma_P)$ reduces
   to calculations of nef cone volumes on surfaces resulting from contraction
   of curves in $\Null(P)$. For that reason we for now turn our attention to nef chamber
   volumes.
   Our first question is: which surfaces have finite nef cone volume?

   First note that
   $\kappa(X)=-\infty$ is a necessary condition for the nef cone on a surface $X$ to
   have finite volume. Otherwise the anticanonical divisor on the
   (in this case unique) minimal model $X'$ for $X$
   would be nef with non-negative self-intersection by virtue of the well known
   classification of smooth algebraic surfaces. But then $-K_X\cdot K_X\le 0$, hence $X'$
   (and thus $X$ itself) would have infinite nef cone volume.
   Our aim is now to show:

\begin{proposition}\label{criterion finiteness}
   A smooth projective surface $X$ has finite nef cone volume if and only if its
   anticanonical divisor $-K_X$ is big.
\end{proposition}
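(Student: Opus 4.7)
The plan is to recognize that both directions of the proposition reduce to a single convex-geometric fact, by way of the standard duality on a surface. Since $\mathcal C_{\Nef(X)} = \overline{\Nef(X)} \cap (-K_X)^{\le 1}$ is a closed, convex, full-dimensional subset of $N^1_\R(X)$, it has finite Lebesgue measure if and only if it is bounded; and this truncated cone is bounded if and only if the linear functional $D \mapsto (-K_X)\cdot D$ is strictly positive on $\Nef(X) \setminus \{0\}$. On a smooth projective surface, $\Nef(X)$ and $\overline{\Bigcone(X)}$ are mutually dual cones under the intersection pairing, and $\Bigcone(X)$ is the topological interior of the latter. The general fact that a vector lies in the interior of a closed convex cone $K$ if and only if it pairs strictly positively with every nonzero element of $K^\vee$ then identifies this strict positivity precisely with the condition $-K_X \in \Bigcone(X)$.

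For the ``if'' direction I would unpack the duality concretely rather than invoke it as a black box. Assume $-K_X$ is big. By Kodaira's lemma, write $-K_X \equiv A + N$ with $A$ an ample $\R$-class and $N$ effective. For any nonzero nef class $D$, the Hodge index theorem forces $A \cdot D > 0$: on $A^\bot$ the intersection form is negative definite, so $A \cdot D = 0$ combined with $D^2 \ge 0$ (nefness) would force $D \equiv 0$; and $N \cdot D \ge 0$ holds because $N$ is effective and $D$ nef. Continuity of $(-K_X)\cdot(-)$ on the compact set $\Nef(X)\cap\{\|D\|_2 = 1\}$ then yields a constant $c > 0$ with $(-K_X)\cdot D \ge c\|D\|_2$ on all of $\Nef(X)$, so $\mathcal C_{\Nef(X)} \subset \{D : \|D\|_2 \le 1/c\}$ has finite measure.

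For the ``only if'' direction, suppose $-K_X$ is not big. By the duality above, one can produce a nonzero nef class $D$ with $(-K_X)\cdot D \le 0$: if $-K_X \in \partial\overline{\Bigcone(X)}$, a supporting hyperplane at $-K_X$ is represented by a nef $D$ with $(-K_X)\cdot D = 0$; if $-K_X \notin \overline{\Bigcone(X)}$, Hahn--Banach separation produces nef $D$ with $(-K_X)\cdot D < 0$. For every $t \ge 0$ we then have $tD \in \Nef(X)$ and $(-K_X)\cdot(tD) \le 0 \le 1$, so the ray $\R_{\ge 0}\cdot D$ lies entirely in $\mathcal C_{\Nef(X)}$, forcing infinite volume.

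The main delicate step is the boundary case of the converse, where $-K_X \in \overline{\Bigcone(X)}\setminus \Bigcone(X)$ and one must carefully produce a nef class supporting the effective cone at $-K_X$. All remaining ingredients — Kodaira's lemma, the Hodge index theorem, and the nef/big-cone duality on a surface — are entirely standard, so beyond identifying the correct convex-geometric framework the proof should be fairly short.
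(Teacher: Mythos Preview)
Your proof is correct and follows essentially the same route as the paper: both reduce finiteness of the nef cone volume to the condition that $-K_X$ pairs strictly positively with every nonzero nef class, and then identify this via nef/pseudoeffective duality with bigness of $-K_X$. The paper isolates the duality step as a standalone convex-geometry lemma (characterizing $\interior(C^*)$ for an arbitrary closed cone) and then quotes it, whereas you argue each direction concretely via Kodaira's lemma plus Hodge index and Hahn--Banach separation---but the underlying mechanism is identical.
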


   For the proof we first need a statement on convex cones,
   which may be seen as an
   \engqq{in vitro} version of Kleiman's ampleness criterion:

\begin{lemma}\label{lemma:convex-cones}
   Let $C\subset\R^n$ be a closed cone, and let
   $$
      C^*=\set{x\in\R^n\with x\cdot c\ge 0\mbox{ for all }
         c\in C}
   $$
   be its dual cone (with respect to a fixed non-degenerate bilinear
   form).
   We have the following characterization of its interior:
   $$
      \interior(C^*)=\set{x\in\R^n\with x\cdot c>0\mbox{ for all }
         c\in C\setminus\set 0}
   $$
\end{lemma}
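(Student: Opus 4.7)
The plan is to prove the two set inclusions separately. Write $A:=\interior(C^*)$ and let $B$ denote the right-hand side of the claimed equality.

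For the inclusion $B\subseteq A$, I would use a compactness argument. Given $x\in B$, the continuous linear functional $c\mapsto x\cdot c$ is strictly positive on the set $C\cap S^{n-1}$, which is compact because $C$ is closed and $0\notin S^{n-1}$. Hence this functional attains a positive minimum $\delta>0$ there. A uniform estimate then shows that for every $y$ sufficiently close to $x$ the functional $c\mapsto y\cdot c$ remains non-negative on $C\cap S^{n-1}$, and hence on all of $C$ by homogeneity of the cone. Thus a whole neighborhood of $x$ is contained in $C^*$, proving $x\in A$.

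For the reverse inclusion $A\subseteq B$, I would argue by contradiction. Fix $x\in A$ and suppose some $c_0\in C\setminus\set{0}$ satisfies $x\cdot c_0=0$ (we already know $x\cdot c_0\ge 0$, since $x\in C^*$). Non-degeneracy of the bilinear form provides a vector $v\in\R^n$ with $v\cdot c_0<0$. Since $x$ lies in the open set $C^*$, the perturbation $x+\epsilon v$ still belongs to $C^*$ for all sufficiently small $\epsilon>0$, yet $(x+\epsilon v)\cdot c_0=\epsilon\,v\cdot c_0<0$, contradicting membership in $C^*$.

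The only mildly subtle point is the bookkeeping in the first inclusion: the statement permits a general non-degenerate bilinear form rather than a genuine inner product, so one has to estimate $|(y-x)\cdot c|$ on $C\cap S^{n-1}$ via the operator norm of the bilinear form with respect to some auxiliary Euclidean structure on $\R^n$. This is a routine estimate with no real obstacle, so I do not expect any genuine difficulty in carrying out the argument.
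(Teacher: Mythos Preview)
Your proposal is correct and follows essentially the same approach as the paper: the paper proves $\interior(C^*)\subset D$ by the same contradiction argument (observing that the linear functional $x\mapsto x\cdot c_0$ is nonzero by non-degeneracy, hence takes negative values near any zero), and then shows $D$ is open via the identical compactness argument on $C\cap S$ for a sphere $S$ with respect to an auxiliary norm. The only cosmetic difference is that the paper phrases the second step as ``$D$ is open'' and then uses $D\subset C^*$ to conclude $D\subset\interior(C^*)$, whereas you directly argue that a neighborhood of $x\in B$ lands in $C^*$; also, the paper explicitly disposes of the trivial edge case $C\cap S=\emptyset$, which you might add for completeness.
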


\begin{proof}
   Denote by $D$ the set on the right-hand side. We show first
   that $\interior(C^*)\subset D$.
   Suppose to the contrary there exists a point $x_0\in \interior(C^*)$
   not in the set $D$, i.e., $x_0\cdot c=0$ for some non-zero $c\in C$.
   Consider the non-zero linear function
   \be
      \phi_c: \R^n & \to & \R \\
      x & \mapsto & x\cdot c \,.
   \ee
   It has a zero in $x_0$, hence must take negative values on points
   in any neighbourhood $U$ of $x_0$. However, if we choose $U$ sufficiently small, then it is
   contained in $\interior(C^*)\subset C^*$, and hence $x\cdot
   c\ge 0$ for $x\in U$. This is a contradiction.

   We now show that $D$ is an open set. As we already know that
   $\interior(C^*)\subset D\subset C^*$, this will conclude the proof.
   Let then $S\subset\R^n$ be the 1-sphere (with respect to any
   fixed norm). We are done if either $D$ or $C\cap S$ are empty.
   Otherwise
   consider for $d\in D$ the linear function
   \be
      \psi_d: C\cap S & \to & \R \\
      x & \mapsto & x\cdot d \,.
   \ee
   It has only positive values and assumes a minimum on the
   compact set $C\cap S$, hence there is a $\delta>0$ such that
   $x\cdot d\ge\delta$ for all $x\in C\cap S$.
   Consequently there is a neighbourhood of $d$ in $\R^n$, all of
   whose elements have positive product with every
   $x\in C\cap S$, and hence with every $x\in C$.
\end{proof}

   If $C$ is the Mori-cone $\NE(X)$ of a
   smooth complex variety $X$, then its dual is by definition the
   nef cone $\Nef(X)$. Using now that by Kleiman's theorem
   \cite[Theorem 1.4.23]{L}
   the ample cone is the interior of the nef cone, Lemma~\ref{lemma:convex-cones}
   recovers Kleiman's ampleness criterion
   \cite[Theorem 1.4.29]{L}:
   $$
      \Amp(X)=\set{D\in N^1_\R(X)\with D\cdot \xi>0\mbox{ for all non-zero } \xi\in\NE(X)}
   $$
   For our present purposes we will need the dual statement:
   If $C$ is the
   nef cone of a smooth projective surface, then its dual
   is by definition the Mori cone, whose
   interior is by \cite[Theorem 2.2.26]{L} the big cone, and
   Lemma~\ref{lemma:convex-cones} yields:

\begin{corollary}\label{corollary:big-cone}
   For any smooth projective surface $X$ we have
   $$
      \Bigcone(X)=\set{D\in N^1_\R(X) \with D\cdot D' > 0 \mbox{ for all non-zero } D' \in \Nef(X)}.
      \label{eq:bigcone}
   $$
\end{corollary}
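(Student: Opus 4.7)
The plan is to apply Lemma~\ref{lemma:convex-cones} directly, with $C := \Nef(X) \subset N^1_\R(X)$ and with the fixed non-degenerate bilinear form taken to be the intersection pairing on $N^1_\R(X)$ (non-degeneracy follows from the Hodge index theorem, which guarantees that the intersection form has signature $(1,\rho-1)$). The cone $\Nef(X)$ is closed by construction, so the hypotheses of the lemma are met.

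The content of the proof is then to identify the two objects appearing on either side of the lemma's conclusion with the two sides of the present corollary. On the abstract side, Lemma~\ref{lemma:convex-cones} identifies $\interior(C^*)$ with the set $\{D \in N^1_\R(X) : D \cdot D' > 0 \text{ for all non-zero } D' \in \Nef(X)\}$, which is the right-hand side of the claimed equality. It remains to identify $\interior(C^*)$ with $\Bigcone(X)$.

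For this, I would observe that on a surface $N^1_\R(X)$ and $N_{1,\R}(X)$ are canonically identified via numerical equivalence, and the intersection pairing becomes the duality pairing. Under this identification, the dual cone $C^* = \Nef(X)^*$ is by definition the Mori cone $\overline{\NE}(X)$: a divisor class $D$ pairs non-negatively with every nef class if and only if it lies in $\overline{\NE}(X)$. Then the interior of $\overline{\NE}(X)$ is the big cone $\Bigcone(X)$ by \cite[Theorem 2.2.26]{L}, which is the remaining ingredient cited in the text preceding the statement.

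There is no real obstacle here beyond a careful bookkeeping of the various dualities; the statement is essentially just Lemma~\ref{lemma:convex-cones} transported through the identification of $\Nef(X)^*$ with $\overline{\NE}(X)$ and the identification of $\interior\overline{\NE}(X)$ with $\Bigcone(X)$. The only subtlety worth flagging explicitly is that one is using the intersection pairing as a pairing $N^1_\R(X) \times N^1_\R(X) \to \R$ rather than $N^1_\R(X) \times N_{1,\R}(X) \to \R$, but on a surface these are the same thing.
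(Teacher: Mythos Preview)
Your proposal is correct and follows essentially the same route as the paper: apply Lemma~\ref{lemma:convex-cones} with $C=\Nef(X)$, identify $C^*$ with the Mori cone $\NE(X)$, and invoke \cite[Theorem~2.2.26]{L} to equate its interior with $\Bigcone(X)$. The additional remarks you make (non-degeneracy of the intersection form, closedness of $\Nef(X)$, the identification $N^1_\R(X)\cong N_{1,\R}(X)$ on a surface) are reasonable points of care but do not constitute a different argument.
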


\begin{proof}[Proof of Proposition \ref{criterion finiteness}]
   The nef cone on $X$ has finite volume if and only if the hypersurface
   $(-K_X)^{=1}$ intersects each of its rays. This is the case if and only if
   for each nef divisor $D$ there exists a positive rational number $d$ such that
   $dD\cdot (-K_X)=1$, i.e., every nef divisor must have positive intersection with
   $-K_X$. The assertion follows now from
   Corollary~\ref{corollary:big-cone}.
\end{proof}

\begin{lemma}
   Let $X$ be a smooth projective surface and let $-K_X$ be big.
   Then $\NE(X)$ is finitely generated.
\end{lemma}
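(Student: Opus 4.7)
The plan is to combine Kodaira's lemma for the big divisor $-K_X$ with the Cone Theorem. By Kodaira's lemma I would write
$$
   -K_X \;\equiv\; A + E
$$
with $A$ an ample $\Q$-divisor and $E=\sum_j a_j E_j$ an effective $\Q$-divisor having finitely many components $E_1,\dots,E_s$. The immediate observation is that any irreducible curve $C$ not equal to any $E_j$ satisfies $E\cdot C\ge 0$, so that
$$
   -K_X\cdot C \;=\; A\cdot C + E\cdot C \;\ge\; A\cdot C \;>\; 0.
$$
In particular, the only irreducible curves $C$ with $K_X\cdot C\ge 0$ lie in the finite set $\set{E_1,\dots,E_s}$.

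Next I would invoke the Cone Theorem to produce the decomposition
$$
   \NE(X) \;=\; \NE(X)_{K_X\ge 0} \;+\; \sum_i \R_{\ge 0}[C_i],
$$
where each $C_i$ is a rational curve generating a $K_X$-negative extremal ray and satisfies $-K_X\cdot C_i\le 3$. The heart of the argument is to show that the sum on the right is finite. For any $C_i$ not among the $E_j$, the inequality $E\cdot C_i\ge 0$ forces $A\cdot C_i\le -K_X\cdot C_i\le 3$. Because $A$ is ample, Kleiman's criterion makes the slab $\set{\xi\in\NE(X)\with A\cdot\xi\le 3}$ compact, and its intersection with the integer lattice of numerical $1$-cycle classes is therefore finite. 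Together with the at most $s$ extremal rays generated by components of $E$, this shows that there are only finitely many $K_X$-negative extremal rays in all.

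It remains to argue that the $K_X$-nonnegative piece $\NE(X)_{K_X\ge 0}$ is also polyhedral. Every extremal ray of the subcone $\NE(X)\cap\set{K_X\ge 0}$ is either an extremal ray of $\NE(X)$ lying in $\set{K_X\ge 0}$ --- in which case it is generated by an irreducible curve with $K_X\cdot C\ge 0$, hence by one of the $E_j$ --- or it lies in the boundary hyperplane $\set{K_X=0}$ as a limit of $K_X$-negative extremal rays of $\NE(X)$. The latter possibility is excluded by the finiteness established in the previous step. Combining, $\NE(X)$ has finitely many extremal rays and is therefore finitely generated. The step I anticipate as the most delicate is precisely this control of boundary accumulation along $\set{K_X=0}$, which is what the compactness-plus-lattice argument delivers.
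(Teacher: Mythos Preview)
Your strategy --- Kodaira's lemma plus the cone theorem --- matches the paper's (which follows Chen--Schnell). The paper applies the cone theorem in its $(K_X+\varepsilon H)$-form to obtain finitely many extremal rays directly, and then identifies the remaining curves with $\mathbb B_+(-K_X)=\Null(P)$; your compactness-plus-lattice bound is a pleasant alternative to the $\varepsilon$-shift. However, your final paragraph has a genuine gap. The dichotomy you assert for extremal rays of $\NE(X)\cap\{K_X\ge 0\}$ is false: a ray of this subcone lying on $\{K_X=0\}$ need not be a limit of $K_X$-negative extremal rays of $\NE(X)$ --- it can simply arise where the hyperplane cuts through the relative interior of a face of $\NE(X)$ (picture a polyhedral $\NE(X)$ with two $K_X$-positive and two $K_X$-negative extremal generators). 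More to the point, your claim that a $K_X$-nonnegative extremal ray of $\NE(X)$ is spanned by an irreducible curve is exactly what requires proof; it is not automatic on surfaces.

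The fix bypasses the subcone analysis entirely. If $R=\R_{\ge0}\,\xi$ is an extremal ray of $\NE(X)$ with $K_X\cdot\xi\ge 0$, then $-K_X\cdot\xi\le 0$ together with $A\cdot\xi>0$ forces $E\cdot\xi<0$, so $E_j\cdot\xi<0$ for some component $E_j$. Now invoke the standard surface lemma: an extremal ray of $\NE(X)$ on which an irreducible curve $C$ is negative must equal $\R_{\ge0}[C]$ (approximate $\xi$ by effective classes, peel off their $C$-component using boundedness against an ample class, and apply extremality to the limit). Hence every extremal ray of $\NE(X)$ is spanned by some $E_j$ or some $C_i$; since $\NE(X)$ contains no line, it is the convex hull of these finitely many rays and therefore polyhedral.
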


\begin{proof}
   This is shown in \cite[Lemma 6]{CS} for rational surfaces.
   Note however that   the given proof of the finite generation
   does not depend on rationality:
   for any $\varepsilon > 0$ and any ample divisor $H$ by the cone
   theorem we find finitely many irreducible curves $C_i$ such that
   $$
      \NE(X) = \NE(X)_{(-K_X-\varepsilon H)^{\le0}} + \sum \R^+_0\cdot [C_i].
   $$
   Furthermore, for a sufficiently small $\varepsilon>0$ the stable base locus
   $\mathbb B(-K_X-2\varepsilon H)$ equals the augmented base locus
   $\mathbb B_+(-K_X)$, which is just $\Null(P)$, where $P$ denotes
   the positive part in the Zariski decomposition of $-K_X$,
  see~\cite[Example 1.11]{ELMNP}.
  (The notion of augmented base locus was introduced in
  \cite{ELMNP} and is motivated by \cite{N}; we recommend
  \cite{ELMNP} or \cite[Sect.~10.3]{L} for an exposition.)

   Now, any irreducible curve $C$ in $(-K_X-\varepsilon H)^{\le0}$
   has negative intersection with $-K_X-2\varepsilon H$, thus is
   an element of $\mathbb B(-K_X-2\varepsilon H)= \Null(P)$. Since
   the intersection matrix of the curves in $\Null(P)$ is negative
   definite, there can be at most $\rho-1$ such curves.
\end{proof}

\begin{theorem}\label{th 2}
   Let $X$ be a smooth surface with big anticanonical divisor
   $-K_X$ and Picard number $\rho$.
   There exists a divisor
   $D\in N^1_\R(X)$ such that the nef cone volume is given by
   $$
      \Vol(\Nef(X)) = \frac1{\rho}\cdot \sum_{E}{(D\cdot E)\cdot\Vol(\Nef(\pi_E(X)))},
   $$
   where the sum is taken over all $(-1)$-curves $E$ in $X$.
\end{theorem}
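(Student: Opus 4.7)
The plan is to compute $\Vol(\Nef(X))$ as the volume of the polytope $\mathcal C_{\Nef(X)}=\Nef(X)\cap(-K_X)^{\le 1}$ by a divergence-theorem (pyramid) decomposition over its facets. Bigness of $-K_X$ makes $\mathcal C_{\Nef(X)}$ bounded (Proposition~\ref{criterion finiteness}), and by the preceding lemma $\NE(X)$ is finitely generated; hence $\Nef(X)$ is rational polyhedral and $\mathcal C_{\Nef(X)}$ is a genuine $\rho$-dimensional polytope with only finitely many facets. These are the top facet $T=\Nef(X)\cap(-K_X)^{=1}$ and one side facet $F_R=\Nef(X)\cap R^\bot\cap(-K_X)^{\le 1}$ for each extremal ray $R$ of $\NE(X)$. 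For a $(-1)$-curve $E$, Proposition~\ref{intersecting is contracting} together with the adapted-lattice-basis argument used in the proof of Theorem~\ref{th chamber volume} identifies $F_E$ isometrically with $\mathcal C_{\Nef(\pi_E(X))}$, so that $\Vol_{\rho-1}(F_E)=\Vol(\Nef(\pi_E(X)))$.

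Next I would apply the divergence theorem with an apex $D\in N^1_\R(X)$: integration of $\operatorname{div}(x\mapsto x-D)\equiv\rho$ over $\mathcal C_{\Nef(X)}$ yields
$$
   \Vol(\mathcal C_{\Nef(X)})\;=\;\frac{1}{\rho}\sum_F d_{\mathrm{sgn}}(D,F)\cdot\Vol_{\rho-1}(F),
$$
summed over all facets. The strategy is to choose $D$ so that every unwanted contribution vanishes, which requires $(-K_X)\cdot D=1$ (to kill $T$) and $D\cdot R=0$ for every extremal ray $R$ of $\NE(X)$ that is not generated by a $(-1)$-curve (to kill the corresponding side facet). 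A natural candidate is $D=P/((-K_X)\cdot P)$, where $P$ is the positive part of the Zariski decomposition of $-K_X$: the denominator is positive because $P$ is big, and $P$ is nef and orthogonal to every curve in $\Null(P)$. What must be verified is that every non-$(-1)$-curve extremal ray of $\NE(X)$ lies in $\Null(P)=\mathbb B_+(-K_X)$; arguing as in the preceding lemma, any extremal ray outside $\Null(P)$ is $(-K_X)$-positive, hence $K_X$-negative, and by the Cone Theorem for surfaces is spanned by a rational curve $C$ with $0<-K_X\cdot C\le 3$, which outside the special cases of $\P^2$ or a conic-bundle fibre forces $C$ to be a $(-1)$-curve.

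Finally, for each $(-1)$-curve $E$ I would evaluate the signed distance $d_{\mathrm{sgn}}(D,E^\bot)$ in intersection-theoretic terms by means of the adapted lattice basis $\pi_E^\ast(B_0)\cup\{E\}$ of $N^1(X)$ described before Proposition~\ref{intersecting is contracting}: since $E$ is a euclidean unit vector orthogonal to $E^\bot$, the $E$-coefficient of any $v$ in this basis equals $-v\cdot E$, and tracking signs (with the polytope lying on the $v\cdot E\ge 0$ side) yields $d_{\mathrm{sgn}}(D,E^\bot)=D\cdot E$. Substituting this together with $\Vol_{\rho-1}(F_E)=\Vol(\Nef(\pi_E(X)))$ into the divergence-theorem decomposition delivers the asserted formula. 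The hardest step I anticipate is the middle one, namely the classification of the extremal rays of $\NE(X)$ on a big-anticanonical surface in terms of $(-1)$-curves and curves in $\Null(P)$, and hence the verification that the candidate apex $D$ really does annihilate every non-$(-1)$-curve extremal ray.
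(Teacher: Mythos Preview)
Your strategy coincides with the paper's: both compute $\Vol(\Nef(X))$ by decomposing the polytope $\mathcal C_{\Nef(X)}=\Nef(X)\cap(-K_X)^{\le 1}$ into pyramids over its facets from an apex $D$ lying on the top face $(-K_X)^{=1}$ and on every side face not coming from a $(-1)$-curve, then identify each $(-1)$-facet with $\mathcal C_{\Nef(\pi_E(X))}$ via Proposition~\ref{intersecting is contracting} and read off the pyramid height as $D\cdot E$ in an adapted lattice basis. Your divergence-theorem framing is just the integral version of this pyramid decomposition.

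The one substantive difference is in how the apex $D$ is produced and how your anticipated ``hardest step'' is handled. The paper does \emph{not} classify the extremal rays of $\NE(X)$ via the Cone Theorem. Instead it argues directly by adjunction: writing $-K_X=A+B$ with $A$ ample and $B$ effective, every irreducible curve $C$ with $C^2\le -2$ satisfies $-K_X\cdot C\le 0$, hence $B\cdot C<0$, hence $C\in\Neg(B)$; one then takes $D=P/(-K_X\cdot P)$ for any big nef $P$ with $\Null(P)=\Neg(B)$ (such $P$ exists by \cite[Proposition~1.1]{BFN}). This gives $D\cdot C=0$ for all curves with $C^2\le -2$ in one stroke, without ever invoking the contraction theorem or sorting $K_X$-negative extremal rays into $(-1)$-curves versus fibres. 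Your candidate $P$ (the positive part of the Zariski decomposition of $-K_X$ itself) would serve equally well here, since the same adjunction argument places every curve with $C^2\le -2$ in $\mathbb B_+(-K_X)=\Null(P)$; so the step you flagged as hardest is in fact short, and the Cone-Theorem detour through the $\P^2$/conic-bundle trichotomy is unnecessary.
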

   Note that together with Proposition \ref{criterion finiteness} this yields Theorem 1 from the introduction.

\begin{proof}
   The proof consists of two parts. First we argue that there is a divisor class
   $D\in N^1_\R(X)$ with $-K_X\cdot D=1$ such that $D$ has intersection zero with
   all irreducible curves whose self-intersection is strictly less than $-1$. In the
   second part we show that for such an element $D$ the claimed identity holds.

   By assumption $-K_X$ is big, thus there exists a representation
   $$
      -K_X = A + B
   $$
   with $A$ an ample $\Q$-divisor and $B$ an effective $\Q$-divisor. Now, let $C$ be an irreducible curve on $X$ with
   $C^2\le -2$. By adjunction we have
   $$
      0\ge -K_X\cdot C = AC + BC.
   $$
   The ampleness of $A$ implies that $A\cdot C$ is strictly positive, showing that $B\cdot C$
   must be strictly negative. Now, being effective, $B$ admits a Zariski decomposition
   $$
      B = P_B + N_B
   $$
   in a nef part $P_B$ and a divisor $N_B$, whose components have negative definite
   intersection matrix. Any curve with $C\cdot B< 0$ thus must be one of the components of $N_B$,
   in other words, $C$ must be an element of $\Neg(B)$. Since the intersection matrix of the
   curves in $\Neg(B)$ is negative definite, $\Neg(B)$ can contain at most $\rho -1$ curves.
   By the same token, there exists a big and nef divisor $P$ on $X$
   with $\Null(P)=\Neg(B)$ (see \cite[Proposition 1.1]{BFN}). In particular we have $P \cdot C = 0$
   for all curves $C$ with $C^2\le -2$. Note that the nefness of $P$ together with
   the bigness of $-K_X$ implies the inequality $-K_X\cdot P > 0$. We can therefore set
   $$
      D:= \frac1{-K_X\cdot P}\cdot P,
   $$
   obtaining a divisor with the desired properties.

   We prove the volume formula by decomposing the polytope
   \linebreak $\mathcal P_X := \Nef(X)\cap(-K_X)^{\le1}$ into pyramids
   with vertex $D$ and the facets of $\mathcal P_X $ as bases.
   Since $D$ is nef by construction and contained in the
   hypersurface $(-K_X)^{=1}$, the polytope's volume is just the sum
   of the volumes of all the pyramids in the decomposition.
   Note that $D$ in addition lies inside all the hypersurfaces
   $C^\bot$ for curves with self-intersection less than $-1$.
   The corresponding pyramids thus have volume 0, hence the nef
   cone volume is just   the sum of volumes of the pyramids with
   bases $\mathcal P_X\cap E^\bot$ for $(-1)$-curves $E$.
   As we have seen, these bases correspond to the
   $(\rho-1)$-dimensional polytopes $\mathcal P_{\pi_E(X)}$,
   thus have the same volume as the nef cone on the surface
   $\pi_E(X)$ resulting from the contraction of $E$.
   The asserted formula follows once we have shown that
   the factor $D\cdot E$ represents the height of the pyramid
   corresponding to $E^\bot$. This is indeed the case: for a
   vector space basis $E_1,\ldots,E_{\rho-1}$ of  $N^1(\pi_E(X))$,
   the vectors $\pi^\ast(E_1),\ldots, \pi^\ast(E_{\rho-1}),-E$ form
   a basis of $N^1(X)$. In this basis the vector $(0,\ldots,0,1)$
   is a normal vector to the hypersurface $E^\bot$. Let $D$
   have a representation $(\alpha_1,\ldots,\alpha_{\rho-1},\alpha)$
   in this basis. Then, since   $E\cdot \pi^\ast(E_i)=0$,  the number $\alpha$
   on the one hand is just the intersection product $D\cdot E$
   and on the other hand its absolute value $|\alpha|$ is the distance of the point $D$ to
   the hypersurface $E^\bot$, i.e., the height of the pyramid in
   question. By our construction, the divisor $D$ is nef, therefore $|\alpha|=\alpha$.
\end{proof}

\begin{example}\label{ex: dP}\rm
   Let $S_r$ be a del Pezzo surface with $3\le r\le8$. The
   decomposition into ample and effective part in the proof
   is just the trivial decomposition
   $$
      -K_X = A + B = -K_X + 0,
   $$
   hence $\Neg(B)$ is empty. Therefore,
   $$
      \Null(-K_{S_r})=\Neg(B)=\emptyset.
   $$
   Following the proof above, we set $D:= \tfrac1{9-r}(-K_{S_r})$ and
   obtain
   \begin{eqnarray*}
      \Vol(\Nef(S_r)) &=& \frac1\rho\sum_E DE\cdot \Vol(\Nef(\pi_E(S_r)))\\
      &=& \frac1{r+1}\sum_E \frac1{9-r}\cdot \Vol(\Nef(S_{r-1}))\\
      &=& \frac{N_r}{(r+1)(9-r)}\Vol(\Nef(S_{r-1})),
   \end{eqnarray*}
   where $N_r$ denotes the number of $(-1)$-curves on the del Pezzo surface $X_r$.
   This formula for the nef cone volume turns out to be the same as calculated in \cite{D}.
\end{example}

   The existence of a formula for the nef cone volume does not necessarily imply
   that it is easy to calculate for any given big anticanonical surface $X$. Knowledge
   of the negative curves on $X$ and on the surfaces resulting from contracting
   $(-1)$-curves is key to the calculation: with this knowledge, an inductive
   calculation is possible, since successive contraction of $(-1)$-curves eventually
   yields a minimal surface with $\kappa=-\infty$, and the nef cone volumes on
   these surfaces are easy to compute.

   Testa, V\'arilly-Alvarado, Velasco in \cite{TVV} list surfaces known to
   have big anticanonical divisor, e.g.  rational surfaces with $K_X^2>0$ or
   blow-ups of Hirzebruch surfaces $X_e$, $e\ge1$, in points that lie on the union
   of the section $C$ with $e+1$ fibers. Furthermore, they give a classification
   for surfaces which are obtained as blow-ups of $\P^2$ in $r$ points
   and have big and effective anticanonical divisor (see \cite[Theorem 3.4]{TVV}). For such a surface
   $X$ one has either
   \begin{itemize}
   \item
      $K_X^2>0$, i.e., $r\le8$, or
   \item
      $a$ of the blown-up points lie on a line, the other $b=r-a$ points
      lie on a irreducible conic, and either $ab=0$ or $\frac1a +\frac4b>1$, or
   \item
      the blown-up points lie on the union of three lines $L_1,L_2,L_3$ with
      $a_i$ of them exclusively on $L_i$, and either $a_1a_2a_3=0$ or
      $\frac1{a_1}+\frac1{a_2}+\frac1{a_3}>1$.
   \end{itemize}
   Note that these surfaces are no longer del Pezzo as soon as curves with
   self-intersection less than $-1$ occur. In the first case this happens
   if the blown-up points are not in general position, in the second case if
   either $a\ge3$ or $b\ge6$, and in the third case if one of the lines $L_i$
   contains at least three of the blown-up points, or if three of the blown-up
   points on different lines $L_i$ are collinear.
   Such surfaces thus provide interesting examples for the application of
   theorem \ref{th 2}.
   In order to illustrate our method we consider
   non-del Pezzo surfaces from the second class:
   we determine the nef cone volume and the volumes of the Zariski chambers
   for blow-ups in $r$ points on a line -- among all blow-ups of $\P^2$
   these are in a sense the other extreme to del Pezzo surfaces
   (see Sect.~\ref{s:big}).
   Finally, we do the analogous computations for blow-ups at infinitely near points
   (see Sect.~\ref{subsect:infinitely-near}).
   We plan to study further
   surfaces with big anticanonical class
   in a subsequent paper.


\subsection{Blow-ups of points on a line in $\P^2$}\label{s:big}

   Let $L$ be a line in $\P^2$ and $p_1,\ldots, p_r$ points on $L$. We consider
   the blow-up
   $$
      \pi: X_L^r\to\P^2
   $$
   in these points and denote the strict transform of $L$ by $\tilde{L}$. Furthermore,
   let $L'$ denote the transform of a general line in $\P^2$.
\begin{proposition}\label{prop: contr XL}
   The negative curves on $X_L^r$ are $\tilde{L}$ and the exceptional curves
   $E_1,\ldots,E_r$. Contracting any of the curves $E_i$ results in
   a surface $X_L^{r-1}$.
\end{proposition}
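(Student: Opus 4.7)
The strategy is to enumerate all negative curves via the Picard basis $H,E_1,\ldots,E_r$ coming from $\pi$, and then to recognize each contraction $\pi_{E_i}$ through the factorization of a blow-up at collinear points.

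First, a quick self-intersection count shows that $\tilde L$ and the $E_i$ are indeed negative curves: one has $E_i^2=-1$, and the strict transform formula gives $\tilde L^2 = L^2 - r = 1-r$. For the converse I would take an arbitrary irreducible curve $C$ distinct from $\tilde L$ and from every $E_i$, write $[C] = dH - \sum_{i=1}^r m_iE_i$ in the basis $H:=\pi^\ast\O_{\P^2}(1),E_1,\ldots,E_r$ of $N^1(X_L^r)$ with $d=\deg\pi(C)\ge 1$ and $m_i=\mathop{\rm mult}\nolimits_{p_i}\pi(C)\ge 0$, and expand $C^2 = d^2-\sum m_i^2$. The key inequality $\sum m_i^2 \le d^2$ then follows from B\'ezout applied to $L$ and $\pi(C)$: since $\pi(C)$ is irreducible and different from $L$, the line $L$ is not a component of $\pi(C)$, so $\sum_{i=1}^r m_i \le L\cdot\pi(C) = d$; squaring the sum of non-negative terms gives $\sum m_i^2 \le (\sum m_i)^2 \le d^2$, whence $C^2\ge 0$.

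For the second assertion, the blow-up $\pi$ factors as the successive blow-up first of the $r-1$ points $\{p_j\}_{j\ne i}$ — producing $X_L^{r-1}$, since these points still lie on $L$ — followed by the blow-up of $p_i$, which introduces $E_i$ as exceptional divisor. Castelnuovo's contractibility criterion, together with the uniqueness of the contraction of a $(-1)$-curve to a smooth surface, then identifies $\pi_{E_i}(X_L^r)$ with $X_L^{r-1}$.

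The only non-routine step is the B\'ezout estimate $\sum m_i \le d$, which is precisely where the collinearity hypothesis enters and is exactly what fails in the del Pezzo setting, allowing additional negative curves to appear.
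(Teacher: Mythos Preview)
Your argument is correct, and it reaches the conclusion by a slightly different route from the paper. Both proofs rest on the same geometric input, namely $d-\sum m_i\ge 0$: you phrase it as B\'ezout on $\P^2$ for $L$ and $\pi(C)$, the paper phrases it as $\tilde L\cdot C\ge 0$ on $X_L^r$; these are the same inequality read downstairs and upstairs. The difference lies in the second ingredient. The paper invokes adjunction for a negative curve ($-K_{X_L^r}\cdot C\le 1$, i.e.\ $3d-\sum m_i\le 1$) and combines it with $d-\sum m_i\ge 0$ to force $2d\le 1$, a contradiction. You instead bypass adjunction entirely and use the purely combinatorial estimate $\sum m_i^2\le\bigl(\sum m_i\bigr)^2\le d^2$ to conclude $C^2\ge 0$ directly. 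Your approach is a bit more elementary and yields the sharper statement $C^2\ge 0$ rather than a contradiction with an assumed negativity; the paper's adjunction route, on the other hand, is the template that adapts verbatim to the infinitely-near case treated next (where one intersects with the strict transform of $L$ in exactly the same way), so each has its merits. For the second assertion the paper simply declares it obvious; your factorization-plus-Castelnuovo argument spells out what that means.
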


\begin{proof}
   Suppose there exists a curve $C= dL' -\sum_{i=1}^r m_i E_i$ with negative
   self-intersection neither equal to $\tilde L$ nor to one of the exceptional curves.
   By adjunction we have $-K_{X_L^r}\cdot C\le 1$, or
   $$
      3d-\sum{m_i}\le 1.
   $$
   Since $\tilde L$ corresponds to the class $L'-\sum{m_iE_i}$ and has
   non-negative intersection $d-\sum{m_i}$ with $C$ we have
   $$
      1\ge 3d-\sum{m_i} = 2d + (d-\sum{m_i}) \ge 2d \ge 2,
   $$
   a contradiction.
   The second assertion is obvious.
\end{proof}

   We now determine the nef cone volume of $X_L^r$ and
   the volumes of all Zariski chambers on this surface.

\begin{proposition}
   For any $r\ge1$, the nef cone volume on $X_L^r$ is given by
   \begin{eqnarray*}
      \Vol(\Nef(X_L^r)) &=& \frac1{2r+2} \Vol(\Nef(X_L^{r-1})) \\
      &=& \left(\frac12\right)^r\cdot\frac1{(r+1)!}\cdot \frac13.
   \end{eqnarray*}

\end{proposition}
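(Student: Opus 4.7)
The plan is to apply Theorem~\ref{th 2} to $X_L^r$, derive a recurrence for $\Vol(\Nef(X_L^r))$, and then iterate it starting from $X_L^0 = \P^2$ with $\Vol(\Nef(\P^2)) = 1/3$.

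Let $H$ denote the pullback of the class of a line, so that $-K_{X_L^r} = 3H - \sum_{i=1}^{r} E_i$ and $\tilde L = H - \sum_{i=1}^{r} E_i$ has self-intersection $1-r$. By Proposition~\ref{prop: contr XL}, the only irreducible curve with self-intersection strictly less than $-1$ is $\tilde L$ (and only for $r \ge 3$), while the $(-1)$-curves are $E_1, \dots, E_r$ together with $\tilde L$ in the case $r = 2$. The crucial construction is the divisor
$$
   D := \tfrac12\, H - \tfrac1{2r}\sum_{i=1}^r E_i.
$$
A direct intersection computation yields $(-K_{X_L^r}) \cdot D = 1$, $D \cdot \tilde L = 0$, and $D \cdot E_i = 1/(2r)$ for each $i$. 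Nefness of $D$ is the one nontrivial point: any irreducible curve $C \ne \tilde L, E_1, \dots, E_r$ has the form $C = aH - \sum b_i E_i$ with $b_i \ge 0$ (since $C \cdot E_i \ge 0$) and $a \ge \sum b_i$ (since $C \cdot \tilde L \ge 0$), which forces $D \cdot C \ge a(r-1)/(2r) \ge 0$.

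With $D$ in hand, Theorem~\ref{th 2} applies with Picard number $\rho = r+1$. By Proposition~\ref{prop: contr XL}, contracting any $E_i$ gives $\pi_{E_i}(X_L^r) = X_L^{r-1}$, and when $r = 2$ the additional $(-1)$-curve $\tilde L$ contributes zero to the sum because $D \cdot \tilde L = 0$. Summing the $r$ equal terms yields
$$
   \Vol(\Nef(X_L^r)) = \frac1{r+1}\cdot r\cdot\frac1{2r}\cdot\Vol(\Nef(X_L^{r-1})) = \frac1{2(r+1)}\Vol(\Nef(X_L^{r-1})),
$$
which is the first claimed equality. Iterating this recurrence down to $r = 0$ and collecting factors gives $\prod_{k=1}^{r}\tfrac1{2(k+1)} \cdot \tfrac13 = \bigl(\tfrac12\bigr)^r\cdot\tfrac1{(r+1)!}\cdot\tfrac13$, which is the second equality.

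The main obstacle is verifying the nefness of $D$, since the formula in Theorem~\ref{th 2} requires $D$ to be a nef divisor with $-K_X \cdot D = 1$ vanishing on all curves of self-intersection below $-1$. Thanks to the complete classification of negative curves in Proposition~\ref{prop: contr XL}, this reduces to the elementary estimate above; no further geometric input about the Mori cone of $X_L^r$ is needed.
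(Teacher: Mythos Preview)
Your proof is correct and follows the same strategy as the paper: apply Theorem~\ref{th 2} to $X_L^r$, use Proposition~\ref{prop: contr XL} to identify the $(-1)$-curves and their contractions, and iterate the resulting recurrence. The one genuine difference is the choice of the auxiliary divisor $D$. The paper takes the asymmetric class $D=\tfrac12(L'-E_1)$, which is visibly nef (it is the class of a moving pencil of lines through $p_1$) and already lies in $E_2^\bot\cap\cdots\cap E_r^\bot$, so only the single term for $E_1$ survives in the sum. You instead take the symmetric class $D=\tfrac12 H-\tfrac1{2r}\sum E_i$, which makes all $r$ terms contribute equally; the price is that nefness is no longer obvious, and you correctly supply the verification using the inequalities $b_i\ge 0$ and $a\ge\sum b_i$ coming from the negative-curve classification. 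Both routes produce the identical recurrence $\Vol(\Nef(X_L^r))=\tfrac1{2(r+1)}\Vol(\Nef(X_L^{r-1}))$, and your explicit handling of the $r=2$ case (where $\tilde L$ is itself a $(-1)$-curve but contributes zero since $D\cdot\tilde L=0$) is a nice point the paper leaves implicit.
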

\begin{proof}
   Following the proof of Theorem \ref{th 2} we need to determine a divisor $D$ on
   $X_L^r$ with $\tilde L\cdot D=0$ and $-K_{X_l^r}\cdot D=1$. These are the only
   conditions since $\tilde L$ is the only curve that can have self-intersection
   less than $-1$. The divisor class
   $$
      D = \frac12\cdot (L'-E_1)
   $$
   satisfies these conditions, and moreover lies in the hyperplanes $E_i^\bot$
   for all $i=2,\ldots,r$.
   Therefore, by Proposition \ref{prop: contr XL}
   \begin{eqnarray*}
      \Vol(\Nef(X_L^r)) &=& \frac 1\rho\sum_{i=1}^r (E_i\cdot D)\Vol(\Nef(X_L^{r-1}))\\
      &=& \frac 1{r+1}\cdot \frac12\Vol(\Nef(X_L^{r-1})).
   \end{eqnarray*}
   Now, the second identity follows inductively using the fact that
   $\Vol(\Nef(\P^2))=\tfrac13$.
\end{proof}

   The following statements about the remaining Zariski chambers are immediate
   consequences of Theorem \ref{th chamber volume}.

\begin{proposition}
   If $r\ge 3$, then for a big an nef divisor $P$ on $X_L^r$ the set $\Null(P)$
   either contains $\tilde L$ and $\Vol(\Sigma_P)=\infty$, or $\Null(P)$
   consists of $s$ exceptional curves and
   \begin{eqnarray*}
      \Vol(\Sigma_P) &=& \frac {(r+1-s)!}{(r+1)!} \Vol(\Nef(X_L^{r-s})) \\
      &=& \left(\frac12\right)^{r-s}\cdot\frac1{(r+1)!}\cdot \frac13.
   \end{eqnarray*}
   If $1\le r\le2$, then $X_L^r$ is the del Pezzo surface $S_r$ with chamber volumes
   according to Proposition \ref{prop: Vol dP}.
\end{proposition}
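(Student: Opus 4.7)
The plan is to reduce the claim to Theorem \ref{th chamber volume} together with Proposition \ref{prop: contr XL} and the nef cone volume computed in the previous proposition; essentially all the work has already been done.

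First I would inspect what curves can appear in $\Null(P)$. By Proposition \ref{prop: contr XL} the only negative irreducible curves on $X_L^r$ are $\tilde L$ and $E_1,\dots,E_r$. Since $\tilde L$ is numerically equivalent to $L'-E_1-\dots-E_r$, one computes $\tilde L^2=1-r$, which is $\le-2$ as soon as $r\ge 3$. Consequently $\tilde L$ is the unique candidate curve of self-intersection strictly less than $-1$. If $\tilde L\in\Null(P)$, Theorem \ref{th chamber volume} immediately gives $\Vol(\Sigma_P)=\infty$, handling the first alternative.

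In the remaining case $\Null(P)\subset\{E_1,\dots,E_r\}$, so $\Null(P)$ consists of $s$ pairwise disjoint $(-1)$-curves (the exceptional divisors being mutually disjoint), and Theorem \ref{th chamber volume} applies with Picard number $\rho=r+1$. This yields
\[
   \Vol(\Sigma_P)=\frac{(r+1-s)!}{(r+1)!}\,\Vol\bigl(\Nef(\pi_S(X_L^r))\bigr).
\]
Iterating Proposition \ref{prop: contr XL} $s$ times shows $\pi_S(X_L^r)\cong X_L^{r-s}$ (the strict transform of $L$ through a point outside the blown-down $E_i$'s persists after each contraction), so the nef cone volume on the right equals $\Vol(\Nef(X_L^{r-s}))$.

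Plugging in the previous proposition's formula
$\Vol(\Nef(X_L^{r-s}))=\bigl(\tfrac12\bigr)^{r-s}\cdot\tfrac1{(r-s+1)!}\cdot\tfrac13$
and observing that $(r+1-s)!=(r-s+1)!$ cancels the denominator, one obtains the stated closed form. For the last sentence, when $r\le 2$ there are no curves of self-intersection less than $-1$ on $X_L^r$ and the surface coincides with $S_r$ (since for at most two points the \emph{general position} condition is vacuous), so the values recorded in Proposition \ref{prop: Vol dP} apply directly. There is no real obstacle here; the only point requiring a line of justification is the identification $\pi_S(X_L^r)\cong X_L^{r-s}$, which is an immediate inductive consequence of Proposition \ref{prop: contr XL}.
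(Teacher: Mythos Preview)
Your argument is correct and follows exactly the route the paper takes: the paper's own proof is a single sentence stating that the assertion is an immediate consequence of Theorem~\ref{th chamber volume}, and you have simply spelled out the details (identifying the negative curves via Proposition~\ref{prop: contr XL}, noting $\tilde L^2=1-r\le -2$ for $r\ge 3$, and invoking the contraction $\pi_S(X_L^r)\cong X_L^{r-s}$). Nothing is missing.
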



\subsection{Blow-ups of infinitely near points}\label{subsect:infinitely-near}

   As a final illustration of the applicability of our technique we
   consider surfaces obtained by iteratively blowing up points infinitely near to $\P^2$:
   we start by picking a line $L$ in $\P^2$ and a point $p_1$ on $L$.
   Blowing up $p_1$ yields the del Pezzo surface $S_1$ on which the strict
   transform $L_1$ of $L$ is an irreducible curve of self-intersection zero.
   On the exceptional curve $E_1$ pick the point $p_2$ corresponding to the tangential
   direction of $L$ in $p_1$. We denote the blow-up of $S_1$ in $p_2$ by $X^\infty_2$.
   Now, on the exceptional curve $E_2$ of the second blow-up pick the point $p_3$ corresponding to the
   tangential direction of $L_1$ in $p_2$, blow it up, and denote the resulting
   surface by $X^\infty_3$. Repeating this process yields surfaces $X^\infty_r$ for
   all natural numbers $r\ge 2$. Note that on these surfaces the anticanonical class
   decomposes as $-K_{X_r^\infty}= (2L) + (L-E_1-\ldots-E_r)$ into a big and an effective divisor, hence is big.

\begin{proposition}
   For $r\ge 2$ the classes of negative curves on $X_r^\infty$ are
   \begin{itemize}
   \item $E_k-E_{k+1}$ for $1\le k\le r-1$,
   \item $E_r$, and
   \item $L-E_1-\ldots-E_r$.
   \end{itemize}
\end{proposition}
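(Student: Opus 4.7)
The plan is to combine an inductive description of the strict transforms through the blow-up tower with an adjunction-plus-intersection argument analogous to the proof of Proposition~\ref{prop: contr XL}.

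First, I would verify that the three listed families are indeed represented by irreducible curves of negative self-intersection. Tracking classes inductively: since $p_{k+1}\in E_k$, the strict transform of the $k$-th exceptional curve picks up $-E_{k+1}$ at step $k+1$ and becomes $E_k-E_{k+1}$ with self-intersection $-2$; the final exceptional $E_r$ is untouched thereafter and has self-intersection $-1$; and the strict transform of $L$ passes through every $p_k$ (since $p_k$ is defined as the tangent direction of the previous transform of $L$ at $p_{k-1}$), so its class is $L-E_1-\cdots-E_r$ with self-intersection $1-r\le-1$ for $r\ge2$.

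Next, I would show no other negative curves exist. Suppose $C=dL-\sum m_iE_i$ is an irreducible curve with $C^2<0$ distinct from every listed class. Because $C$ is irreducible, it has non-negative intersection with each of the listed curves, which gives
$$
   m_1\ge m_2\ge\cdots\ge m_r\ge 0 \quad\text{and}\quad d\ge\sum_i m_i.
$$
Since $-K_{X_r^\infty}=3L-\sum E_i$, adjunction together with $g(C)\ge 0$ yields $-K\cdot C=3d-\sum m_i\le C^2+2\le 1$. Combined with $d\ge\sum m_i$ this forces $2d\le 1$, hence $d=0$. But an irreducible curve with $d=0$ is contracted by the structure morphism $X_r^\infty\to\P^2$, so it must be supported on the exceptional locus. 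The irreducible components of that locus are precisely the strict transforms $E_1-E_2,\ldots,E_{r-1}-E_r,E_r$, contradicting the assumption that $C$ is not in the list.

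The point requiring some care is in the first step: one must check that in the tower the blow-up center $p_{k+1}$ does not accidentally lie on the strict transform of an earlier exceptional, so that those transforms retain the clean form $E_j-E_{j+1}$. This follows from the transversality of $L^{(k-1)}$ with $E_{k-1}$, which implies that the tangent direction of $L^{(k-1)}$ at $p_k$ differs from the intersection point of any earlier exceptional strict transform with $E_k$. Once this is in place, the class computations in the verification step become routine.
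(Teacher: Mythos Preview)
Your argument is correct and follows essentially the same route as the paper: both use adjunction to bound $-K\cdot C=3d-\sum m_i\le 1$ and combine this with $C\cdot L'=d-\sum m_i\ge 0$ (from the irreducibility of the strict transform $L'$ of $L$) to force $2d\le 1$. The paper stops there, implicitly taking $d\ge 1$; you make the case $d=0$ explicit by observing that such a curve lies in the exceptional locus, and you are also more careful in verifying that the blow-up centers avoid the earlier exceptional strict transforms so that the classes $E_k-E_{k+1}$ are as claimed---both welcome clarifications, but not a different method.
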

\begin{proof}
   By the construction of $X_r^\infty$, the class $L-E_1-\ldots -E_r$ contains
   an irreducible curve $L'$. Its self-intersection is $1-r$. Again by construction,
   the classes $E_k-E_{k+1}$ contain irreducible curves of self-intersection $-2$.
   Suppose there exists a negative curve $E$ on $X_r^\infty$ not listed above. Then $E$
   has a representation $E=dL-\sum_{i=1}^r m_iE_i$ and by adjunction the intersection
   with the anticanonical divisor is at most 1. By the irreducibility of $L'$, we obtain
   $$
      1\ge -K_{X_r^\infty} E = 3d - \sum m_i = 2d + L'E \ge 2,
   $$
   a contradiction.
\end{proof}

\begin{proposition}
   For any $r\ge2$, the nef cone volume on $X_r^\infty$ is given by
   \begin{eqnarray*}
      \Vol(\Nef(X_L^r)) &=& \frac1{2r(r+1)} \Vol(\Nef(X_{r-1}^\infty)) \\
      &=& \frac1{2^r\cdot r!(r+1)!}\cdot \frac13.
   \end{eqnarray*}
   There is exactly one additional chamber having finite volume, namely the chamber
   $\Sigma_P$ with $\Null(P)=\set{E_r}$. Its
   volume is
   \begin{eqnarray*}
      \Vol(\Sigma_P) &=& \frac{((r+1)- 1)!}{(r+1)!}\Vol(\Nef(\pi_{E_r}(X_r^\infty)))\\
      &=& \frac1 {r+1}\Vol(\Nef(X_{r-1}^\infty)).
   \end{eqnarray*}
\end{proposition}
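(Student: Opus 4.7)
The plan is to apply Theorem \ref{th 2} to extract a recursion for $\Vol(\Nef(X_r^\infty))$, iterate it from the base case $X_1^\infty = S_1$, and then invoke Theorem \ref{th chamber volume} for the additional chamber. The central step is to exhibit a divisor $D \in N^1_\R(X_r^\infty)$ satisfying $D \cdot (-K_{X_r^\infty}) = 1$ together with $D \cdot C = 0$ for every irreducible curve $C$ with $C^2 < -1$. By the preceding proposition these curves are the chain $E_k - E_{k+1}$ for $1 \le k \le r-1$ and, when $r \ge 3$, the strict transform $\tilde L := L - E_1 - \ldots - E_r$ of self-intersection $1-r$. Orthogonality to the chain forces all $E_i$-coefficients of $D$ to coincide, and the remaining linear conditions then pin down
\[
   D := \frac{1}{2}\,L - \frac{1}{2r}(E_1 + \ldots + E_r),
\]
for which one verifies $D \cdot \tilde L = 0$, $D \cdot (E_k - E_{k+1}) = 0$, $D \cdot (-K_{X_r^\infty}) = 1$, and $D \cdot E_r = \tfrac{1}{2r}$. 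To check that $D$ is nef, I note that any irreducible curve $C = dL - \sum_i m_i E_i$ with $C \ne \tilde L$ satisfies $C \cdot \tilde L = d - \sum_i m_i \ge 0$, and $d \ge 0$ since $C$ is effective, so
\[
   D \cdot C \;=\; \frac{1}{2r}\Bigl(rd - \sum_i m_i\Bigr) \;\ge\; \frac{(r-1)d}{2r} \;\ge\; 0.
\]

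With $D$ in hand the recursion is immediate. For $r \ge 3$ the curve $E_r$ is the only $(-1)$-curve on $X_r^\infty$, and for $r = 2$ the remaining $(-1)$-curve $\tilde L$ satisfies $D \cdot \tilde L = 0$ and so contributes nothing to the sum in Theorem \ref{th 2}. Since $\pi_{E_r}(X_r^\infty) = X_{r-1}^\infty$ by the very construction of the tower, Theorem \ref{th 2} yields
\[
   \Vol(\Nef(X_r^\infty)) \;=\; \frac{1}{r+1} \cdot \frac{1}{2r} \cdot \Vol(\Nef(X_{r-1}^\infty)),
\]
and iterating from $\Vol(\Nef(S_1)) = 1/12$ gives the claimed product $\tfrac{1}{2^r\,r!(r+1)!}\cdot\tfrac{1}{3}$ after a routine simplification. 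Finally, by Theorem \ref{th chamber volume} any finite-volume chamber has $\Null(P)$ consisting of pairwise disjoint $(-1)$-curves, and for $r \ge 3$ the preceding proposition shows $E_r$ is the unique $(-1)$-curve on $X_r^\infty$, so $\Null(P) = \{E_r\}$ is the only non-trivial possibility; its volume is then immediate from Theorem \ref{th chamber volume} with $s = 1$.

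The main obstacle is the nefness verification for $D$, which reduces to the standard observation that distinct irreducible curves have non-negative intersection, applied to $C$ and $\tilde L$; once this is in hand, the rest is a direct application of Theorems \ref{th 2} and \ref{th chamber volume} together with a routine simplification of the iterated product.
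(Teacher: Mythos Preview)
Your proof is correct and follows essentially the same route as the paper's: exhibit the divisor $D$ required by Theorem~\ref{th 2}, use that only $E_r$ contributes to the sum, iterate from $\Vol(\Nef(S_1))=\tfrac{1}{12}$, and then invoke Theorem~\ref{th chamber volume} for the remaining chamber. Your $D = \tfrac12 L - \tfrac{1}{2r}\sum_i E_i$ is in fact the correct solution to the linear system (the paper's displayed coefficient $\tfrac13$ in place of $\tfrac1r$ is a typo, as one sees from $D\cdot E_r=\tfrac{1}{2r}$), and your added nefness check together with the explicit handling of the extra $(-1)$-curve $\tilde L$ at $r=2$ are welcome refinements over the paper's terser argument.
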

\begin{proof}
   The equations defining the required divisor $D=dL-\sum a_iE_i$ in Theorem \ref{th 2}
   in this setting are
   \begin{eqnarray*}
      3d-a_1-\ldots -a_r &=& 1, \\
      d-a_1-\ldots -a_r &=& 0, \\
      a_j-a_{j+1} &=& 0 \quad \text{ for } 1\le j\le r-1.
   \end{eqnarray*}
   Consequently we set $D:=\frac12( L - \frac13\sum_{i=1}^r E_i)$. Then by the theorem,
   the nef chamber volume is given by
   \begin{eqnarray*}
      \Vol(\Nef(X_r^\infty)) &=& \frac 1{r+1}\cdot (D\cdot E_r)\cdot\Vol(\Nef(\pi_{E_r}(X_{r}^\infty)))\\
      &=& \frac1 {2r(r+1)}\cdot \Vol(\Nef(X_{r-1}^\infty)),
   \end{eqnarray*}
   and the second asserted identity follows inductively.

   The statement about the remaining Zariski chambers
   follows using
   Theorem~\ref{th chamber volume}.
\end{proof}



   \bigskip
   \small
   Tho\-mas Bau\-er,
   Fach\-be\-reich Ma\-the\-ma\-tik und In\-for\-ma\-tik,
   Philipps-Uni\-ver\-si\-t\"at Mar\-burg,
   Hans-Meer\-wein-Stra{\ss}e,
   D-35032~Mar\-burg, Germany.

   \nopagebreak
   \textit{E-mail address:} \texttt{tbauer@mathematik.uni-marburg.de}

   \bigskip
   David Schmitz,
   Fach\-be\-reich Ma\-the\-ma\-tik und In\-for\-ma\-tik,
   Philipps-Uni\-ver\-si\-t\"at Mar\-burg,
   Hans-Meer\-wein-Stra{\ss}e,
   D-35032~Mar\-burg, Germany.

   \nopagebreak
   \textit{E-mail address:} \texttt{schmitzd@mathematik.uni-marburg.de}



\begin{thebibliography}{99}\footnotesize\itemsep=0cm\parskip=0cm

\bibitem{BFN}
   Bauer, Th., Funke, M., Neumann, S.:
   Counting Zariski chambers on Del Pezzo surfaces.
   Journal of Algebra 324, 92-101 (2010)

\bibitem{BKS}
   Bauer, Th., K\"uronya, A., Szemberg, T.:
   Zariski chambers, volumes, and stable base loci.
   J. reine angew. Math. 576, 209-233 (2004)


\bibitem{CS}
   Chen, D., Schnell, Chr.:
   Surfaces with big anticanonical class.
   Preprint 2008

\bibitem{D}
   Derenthal, U.:
   On a constant arising in Manin's conjecture for Del Pezzo surfaces.
   Math. Res. Lett. 3, 481-489 (2007)

\bibitem{D5}
   Derenthal, U.:
   Manin's conjecture for a quintic del Pezzo surface with A2 singularity.
   [arXiv:0710.1583]

\bibitem{D3}
   Derenthal, U., Browning, T.D.:
   Manin's conjecture for a cubic surface with D5 singularity.
   International Mathematics Research Notices IMRN (2009), no. 14, 2620-2647

\bibitem{D4}
   Derenthal, U., Browning, T. D.:
   Manin's conjecture for a quartic del Pezzo surface with A4 singularity.
   Annales de l'Institut Fourier 59 (2009), no. 3, 1231-1265

\bibitem{DJT}
   Derenthal, U., Joyce, M., Teitler, Z.:
   The nef cone volume of generalized del Pezzo surfaces.
   Algebra \& Number Theory 2, 157-182 (2008)

\bibitem{ELMNP}
	 Ein, L., Lazarsfeld, R., Mus\c{t}\v{a}ta, M., Nakamaye, M., Popa, M.:
	 Asymptotic invariants of base loci.
	 Ann. Inst. Fourier 56, No.6, 1701-1734 (2006)



\bibitem{L}
   Lazarsfeld, R.:
   Positivity in Algebraic Geometry.
   Springer 2004

\bibitem{N}
	 Nakayame, M.:
	 Stable base loci of linear series.
	 Math. Ann. 318, 837-847 (2000)

\bibitem{Pe}
   Peyre, E.:
   Hauteurs et mesures de Tamagawa sur les vari\'et\'es de Fano.
   Duke Math. J. 79, 101-218 (1995)

\bibitem{TVV}
   Testa, D., V\'arilly-Alvarado, A., Velasco, M.:
   Big rational surfaces.
   [arXiv:0901.1094]

\end{thebibliography}
\end{document}